\theoremstyle{plain}
\newtheorem{thm}{Theorem}[section]
\newtheorem{cor}[thm]{Corollary}
\newtheorem{pro}[thm]{Proposition}
\newtheorem{lem}[thm]{Lemma}
\newtheorem*{ex*}{Example}
\newcommand{\N}{\mathbb{N}}
\newcommand{\R}{\mathbb{R}}
\def\P{\mathcal P}
\def\m{\mu} 						
\def\J{\mathcal J} 			
\def\a{\alpha}
\def\b{\beta}
\def\ab{\alpha,\beta}
\def\ba{\beta,\alpha}
\def\t{\theta}
\def\v{\varphi}
\def\vp{\varphi}
\def\st{\sin \frac{\theta}{2}}
\def\svp{\sin \frac{\varphi}{2}}
\def\ste{\sin \frac{\theta + \eta}{2}}
\def\stpe{\sin \frac{\theta' + \eta}{2}}
\def\ct{\cos \frac{\theta}{2}}
\def\cvp{\cos \frac{\varphi}{2}}
\def\cte{\cos \frac{\theta + \eta}{2}}
\def\ctpe{\cos \frac{\theta' + \eta}{2}}
\def\pia{d\Pi_{\alpha}(u)}
\def\pib{d\Pi_{\beta}(v)}
\def\ot{\Omega_{\alpha,\beta}(\theta,\eta,t)}
\def\ott{\Omega_{\alpha,\beta}(\theta',\eta,t)}
\def\ottt{\Omega_{\alpha,\beta}}
\DeclareMathOperator{\support}{supp}
\DeclareMathOperator{\id}{Id}
\def\piK{d\Pi_{\alpha, K}(u)}
\def\piR{d\Pi_{\beta, R}(v)}
\def\abpiK{\Pi_{\alpha, K}}
\def\stt{\sin \frac{\widetilde{\theta}}{2}}
\def\ctt{\cos \frac{\widetilde{\theta}}{2}}
\def\lam{\lambda}
\title[Lusin area integrals related to Jacobi expansions]{Lusin area integrals related to Jacobi expansions}
\author[T.Z. Szarek]{Tomasz Z. Szarek}
\address{Tomasz Z. Szarek,     \newline
			Institute of Mathematics,
		Polish Academy of Sciences, \newline
      \'Sniadeckich 8,
      00--656 Warszawa, Poland
      }
\email{szarektomaszz@gmail.com}
\begin{document}

\begin{abstract}
We investigate mixed Lusin area integrals associated with Jacobi trigonometric polynomial expansions. We prove that these operators can be viewed as vector-valued Calder\'on-Zygmund operators in the sense of the associated space of homogeneous type.
Consequently, their various mapping properties, in particular on weighted $L^p$ spaces, follow from the general theory.
\end{abstract}

\maketitle

\footnotetext{
\emph{\noindent 2010 Mathematics Subject Classification:} primary 42C05; secondary 42C10\\
\emph{Key words and phrases:} Jacobi polynomial, Jacobi expansion, Jacobi operator,
			Jacobi-Poisson kernel,
			Jacobi-Poisson semigroup, square function, Lusin area integral, 
			Calder\'on-Zygmund operator. \\
			\indent	
Research supported by the 
National Science Centre of Poland, project no.\ 2012/05/N/ST1/02746.
}

\section{Introduction} \label{sec:intro}

One of the principal aims of the papers \cite{NoSj} and \cite{NoSjSz} was to prove that several fundamental harmonic analysis operators in Jacobi trigonometric polynomial expansions are (vector-valued) Calder\'on-Zygmund operators. That research included such operators as higher order Riesz transforms, multipliers of Laplace and Laplace-Stieltjes transform types, Jacobi-Poisson semigroup maximal operator and Littlewood-Paley-Stein type mixed $g$-functions. 
This article is a continuation and completion of the research performed in \cite{NoSj,NoSjSz}. Motivated by the comment in \cite[p.\,187]{NoSjSz} in the present paper, we study mixed Lusin area integrals from a similar perspective. These objects have more complex structure than those mentioned above, therefore their treatment is considerably more involved and demands more effort and additional technical tools. We point out that analysis in various Jacobi settings received a considerable attention in recent years, see for instance \cite{CNS,L,NoSj,NoSj2,NoSjSz} as well as numerous other references given there.

In the last years Lusin area type integrals attracted attention of many mathematicians. These operators, sometimes called conical square functions, were also studied in some contexts of orthogonal expansions, see e.g.\ \cite{BCFR,BFRTT,BMR,CaSz,K,MNP,NoStSz,P,TZS}.
In the classical situation these objects turn out to be not only interesting on their own right, but also have significant applications. For instance, a variant of Lusin area integral was used by Segovia and Wheeden \cite{SW} to characterize potential spaces on $\R^d$, $d \ge 1$.
Inspired by that paper, the authors of \cite{BFRTT} showed, among other things, that a similar characterization is also possible in case of some Schr\"odinger operators' frameworks. 
We point out that quite recently some variant of Lusin area integral was investigated in the Ornstein-Uhlenbeck context in \cite{K,MNP,P} in connection with the Gaussian Hardy space theory point of view.
Thus our motivation to study mixed Lusin area integrals in Jacobi expansions comes also from their potential applications in further research.   

In this article we study two kinds of mixed Lusin area integrals $S_{M,N}^{\ab}$ and $\mathcal{S}_{M,N}^{\ab}$ (see Section \ref{sec:prel} for the definitions), which come from two different notions of higher order derivatives. The first one is simply a composition of the first order derivative and was for instance implemented in \cite{NoSj,NoSjSz}. The second one was used in \cite{L} and has its roots in the so-called symmetrization procedure proposed by Nowak and Stempak in \cite{NoSt}. We note that, in some aspects, the latter notion seems to be closer to the classical theory on the Euclidean spaces. To see this compare for example \cite[Proposition 2.5]{CNS} with \cite[Remark 3.8]{CNS}, where two kinds of higher order Riesz transforms are studied in the Jacobi context.

Our main result, see Theorem \ref{thm:CZ} below, says that the mixed Lusin area integrals of both kinds can be viewed as vector-valued Calder\'on-Zygmund operators in the sense of the associated space of homogeneous type. 
Consequently, their mapping properties follow from the general theory. The main difficulty connected with the Calder\'on-Zygmund theory approach is showing the related kernel estimates. Here the starting point is the method of proving standard estimates established in \cite{NoSj} for $\ab \ge -1/2$ and extended in \cite{NoSjSz} to all admissible $\ab > -1$. 
We point out that the crucial role in that method plays a convenient integral representation for the Jacobi-Poisson kernel obtained in \cite{NoSjSz}. 
Nevertheless, to treat Lusin area integrals, which have more complex nature than the operators considered in the above mentioned papers, some generalization of this technique is needed. The latter is of independent interest and is inspired by similar tools elaborated recently in other settings of orthogonal expansions, see \cite{CaSz,NoStSz,TZS}. 

The paper is organized as follows. In Section \ref{sec:prel} we introduce the context of Jacobi expansions and define the mixed Lusin area integrals. Further, we state the main result (Theorem \ref{thm:CZ}) and reduce its proof to showing the standard estimates for the related kernels (Theorem \ref{thm:kerest}). We conclude this section by giving comments connected with our main result. 
Section \ref{sec:kerest} contains preparatory facts and lemmas, which finally allows us to prove the relevant kernel estimates. This is the largest and the most technical part of the paper.

Throughout the paper we use a fairly standard notation with essentially all symbols referring to
the space of homogeneous type $((0,\pi),\mu_{\ab},|\cdot|)$, where
$|\cdot|$ is the Euclidean norm and $\mu_{\ab}$ is a measure on $(0,\pi)$ defined below. In particular, for $\t \in (0,\pi)$ and $r>0$ the ball $B(\t,r)$ is simply the interval $(\t - r, \t + r) \cap (0,\pi)$.
By $\langle f,g \rangle_{d\mu_{\ab}}$ we
mean $\int_{(0,\pi)} f\overline{g}\,d\mu_{\ab}$ whenever the integral makes sense.
Further, by $L^p(w d\mu_{\ab}) = L^p((0,\pi), w d\mu_{\ab})$ we understand the weighted
$L^p$ space with $w$ being a non-negative
weight on $(0,\pi)$. Furthermore, for $1 \le p < \infty$ we denote by $A_p^{\ab}$ the Muckenhoupt class
of $A_p$ weights connected with the space 
$((0,\pi),\mu_{\ab},|\cdot|)$, for the definition of $A_p^{\ab}$ see for instance \cite[p.\,720]{NoSj}.

When writing estimates, we will frequently use the notation $X \lesssim Y$ to indicate that
$X \le C Y$ with a positive constant $C$ independent of significant quantities. We shall write
$X \simeq Y$ when simultaneously $X \lesssim Y$ and $Y \lesssim X$.

\section{Preliminaries and statement of main result} \label{sec:prel}

As in \cite{NoSj,NoSjSz},
we consider expansions into Jacobi trigonometric polynomials.
For parameters $\ab > -1$ the normalized Jacobi trigonometric polynomials are given by
$$
\mathcal{P}_n^{\ab}(\t) = c_n^{\ab} P_n^{\ab}(\cos\t), \qquad \t \in (0,\pi),
\quad n\ge 0,
$$
where $c_n^{\ab}$ are suitable normalizing constants, and $P_n^{\ab}$ are the classical
Jacobi polynomials defined on the interval $(-1,1)$, see Szeg\H o's monograph \cite{Sz} or \cite{NoSj,NoSjSz}. 
It is well known that the system $\{\mathcal{P}_n^{\ab}: n \ge 0\}$ is an orthonormal basis in $L^2(d\mu_{\ab})$,
where $\mu_{\ab}$ is a measure on the interval $(0,\pi)$ defined by the density
$$
d\mu_{\ab}(\t) = \Big( \sin\frac{\t}2 \Big)^{2\alpha+1} \Big( \cos\frac{\t}2\Big)^{2\beta+1} d\t, \qquad \t \in (0,\pi).
$$
Further, $\mathcal{P}_n^{\ab}$ are eigenfunctions of the Jacobi differential operator
$$
\mathcal{J}^{\ab} = - \frac{d^2}{d\theta^2} - \frac{\alpha-\beta+(\alpha+\beta+1)\cos\theta}{\sin \theta}
    \frac{d}{d\theta} + \lam_0^{\ab}, \qquad \textrm{where} \qquad 
    \lam_0^{\ab} =  \left( \frac{\alpha+\beta+1}{2} \right)^2.
$$
More precisely,
$$
\mathcal{J}^{\ab} \mathcal{P}_n^{\ab} = \lam_n^{\ab} \mathcal{P}_n^{\ab},
\qquad \lam_n^{\ab} = \left( n + \frac{\alpha+\beta+1}{2} \right)^2,
    \qquad n \ge 0.
$$
We denote by the same symbol $\mathcal{J}^{\ab}$ the natural self-adjoint and non-negative extension in $L^2(d\mu_{\ab})$ given by
\[
\mathcal{J}^{\ab} f = \sum_{n=0}^{\infty} \lam_n^{\ab}
    \big\langle f,\P_n^{\ab}\big\rangle_{d\mu_{\ab}} \P_n^{\ab}
\]
on the domain consisting of all $f \in L^2(d\mu_{\ab})$ for which the above series converges in $L^2(d\mu_{\ab})$.

The Jacobi-Poisson semigroup generated by $-\sqrt{\J^{\ab}}$ is expressible via the spectral series
\[
\mathcal{H}_t^{\ab}f =
\sum_{n=0}^{\infty} e^{- t \sqrt{\lam_n^{\ab} } }
    \big\langle f,\P_n^{\ab}\big\rangle_{d\mu_{\ab}} \P_n^{\ab},
\qquad t \ge 0, \quad f \in L^2(d\mu_{\ab}).
\]
Further, it has the integral representation
\begin{align*}
\mathcal{H}_t^{\ab}f (\t) &=
\int_0^\pi H_t^{\ab}(\t,\v) f(\v) \, d\mu_{\ab}(\v), 
\qquad \t \in (0,\pi), \quad f \in L^2(d\mu_{\ab}),\\
H_t^{\ab}(\t,\v) &=
    \sum_{n=0}^{\infty} e^{-t \sqrt{\lam_n^{\ab} } } \P_n^{\ab}(\t) \P_n^{\ab}(\v),
		\qquad t>0, \quad \t,\v \in (0,\pi).
\end{align*}
It is worth noting that the series/integral defining $\mathcal{H}_t^{\ab}f (\t)$
converges pointwise and produces a smooth function of 
$(t,\t) \in (0,\infty) \times (0,\pi)$ for any 
$f \in L^p(wd\mu_{\ab})$, $w \in A_p^{\ab}$, $1 \le p < \infty$; for details see \cite[Section 2]{NoSj}.
Furthermore, the series defining the so-called Jacobi-Poisson kernel $H_t^{\ab}(\t,\v)$ is well defined in a pointwise sense for all $t > 0$, $\t,\v \in (0,\pi)$ and produces a smooth function of 
$(t,\t,\v) \in (0,\infty) \times (0,\pi)^2$.
We point out that the exact behavior of this kernel, as well as its various derivatives with respect to $t,\t$ and $\v$, is hidden behind subtle oscillations. To overcome this problem we will use a convenient method of estimating various kernels defined via $H_t^{\ab}(\t,\v)$ established recently in \cite{NoSj} under a restriction $\ab \ge -1/2$ and then extended in \cite{NoSjSz} to all admissible $\ab > -1$.

Now we introduce the notion of (higher order) derivatives associated with our setting. 
The natural first order derivative $\delta$ emerges from the factorization
$$
\J^{\ab} = \delta^*\delta + \lam_0^{\ab}, 
$$
where
\[
\delta= \frac{d}{d\theta}, \qquad
\delta^{*} = - \frac{d}{d\theta} - (\alpha+1/2)\cot\frac{\t}2+(\beta+1/2)\tan\frac{\t}2;
\]
notice that $\delta^{*}$ is the formal adjoint of $\delta$ in $L^2(d\m_{\ab})$.
The choice of $\delta$ as the first order derivative is motivated by mapping properties of fundamental harmonic analysis operators in the Jacobi framework; see \cite[Remark 2.6]{NoSj} where it is shown that the choice of $\delta^{*}$ would be inappropriate.
On the other hand, the proper choice of higher order derivatives is a much more subtle matter. In the sequel we will consider two choices. Precisely, we can iterate $\delta$ or interlace $\delta$ with $\delta^*$, which leads to $\delta^N$ and 
$$
{D}^N =
    \underbrace{\ldots \delta \delta^* \delta \delta^* \delta}_{N\; \textrm{components}}, \qquad N \ge 0
$$
(by convention, $\delta^0 = D^0 = \id$),
respectively.
We point out that the derivative $\delta^N$ is used in \cite{CNS,NoSj,NoSjSz} whereas ${D}^N$ appears in \cite{CNS,L} and have its roots in the so-called symmetrization procedure proposed in \cite{NoSt}. 

Now we are ready to introduce the central objects of our study in this paper. We define the mixed 
Lusin area integrals as follows
\begin{align*}
S_{M,N}^{\ab}f(\t) 
& = 
\bigg( \int_{\Gamma(\t)} t^{2M+2N-1} \big| \partial_{t}^M \delta^N \mathcal{H}_t^{\ab}f(\eta)
\big|^{2} \, \chi_{ \{ \eta \in (0,\pi)  \} }
\frac{d\mu_{\ab}(\eta) \, dt}
{V_{t}^{\ab}(\t)} \bigg)^{1\slash 2}, \qquad \t \in (0,\pi), \\
\mathcal{S}_{M,N}^{\ab}f(\t) 
& = 
\bigg( \int_{\Gamma(\t)} t^{2M+2N-1} \big| \partial_{t}^M  D^N \mathcal{H}_t^{\ab}f(\eta)
\big|^{2} \, \chi_{ \{ \eta \in (0,\pi)  \} }
\frac{d\mu_{\ab}(\eta) \, dt}
{V_{t}^{\ab}(\t)} \bigg)^{1\slash 2}, \qquad \t \in (0,\pi),
\end{align*}
where $M, N \in \N$ are such that $M + N >0$, 
$\Gamma(\t)$ is the cone with vertex at $\t \in (0,\pi)$,
\[
\Gamma(\t) = (\t,0) + \Gamma, \qquad 
\Gamma = \big\{   (\eta,t) \in \R \times (0,\infty) : |\eta| < t  \big\},
\]
(note that the exact aperture of this cone is insignificant for our developments and that we may replace $\Gamma$ by 
$\widetilde{\Gamma} = \Gamma \cap (-\pi,\pi) \times (0,\infty)$) 
and $V_{t}^{\ab}(\t)$ is the $\mu_{\ab}$ measure of the ball (interval) centered at $\t$ and of radius $t$, restricted to $(0,\pi)$. More precisely,
\[
V_{t}^{\ab}(\t) = \mu_{\ab} \big( B(\t,t) \big), 
\qquad t>0, \quad \t \in (0,\pi).
\]
Observe that the formulas defining $S_{M,N}^{\ab}f$ and $\mathcal{S}_{M,N}^{\ab}f$, understood in a pointwise sense, are valid for any $f \in L^p(wd\mu_{\ab})$, $w \in A_p^{\ab}$, $1 \le p < \infty$; see the comment concerning the smoothness of $\mathcal{H}_t^{\ab}f(\t)$ above.

To obtain the boundedness result for the mixed 
Lusin area integrals in question we will prove that they can be viewed as vector-valued Calder\'on-Zygmund operators in the sense of the space of homogeneous type
$((0,\pi), \mu_{\ab},|\cdot|)$. We will need a slightly more general definition of the standard kernel, or rather standard estimates, than the one used in \cite{NoSj,NoSjSz}.
More precisely, we will allow slightly weaker smoothness estimates as indicated below, see for instance \cite{CaSz,NoStSz,TZS} where the Lusin area integrals were treated in some other orthogonal expansions settings.

Let $\mathbb{B}$ be a Banach space and let $K(\t,\vp)$ be a kernel defined on 
$(0,\pi) \times (0,\pi)\backslash \{ (\t,\vp):\t=\vp \}$ and taking values in $\mathbb{B}$.
We say that $K(\t,\vp)$ is a \emph{standard kernel} in the sense of the space of homogeneous type
$((0,\pi), \mu_{\ab},|\cdot|)$ if it satisfies the so-called \emph{standard estimates}, i{.}e{.},\
the growth estimate
\begin{equation} \label{gr}
\|K(\t,\vp)\|_{\mathbb{B}} \lesssim \frac{1}{\mu_{\ab}(B(\t,|\t-\vp|))}, 
\qquad \t \ne \vp,
\end{equation}
and the smoothness estimates
\begin{align}
\| K(\t,\vp)-K(\t',\vp)\|_{\mathbb{B}} & \lesssim 
\left( \frac{|\t-\t'|}{|\t-\vp|} \right)^\gamma \,
 \frac{1}{\mu_{\ab}(B(\t,|\t-\vp|))},
\qquad |\t-\vp|>2|\t-\t'|, \label{sm1}\\
\| K(\t,\vp)-K(\t,\vp')\|_{\mathbb{B}} & \lesssim 
\left( \frac{|\vp-\vp'|}{|\t-\vp|}\right)^\gamma \,
 \frac{1}{\mu_{\ab}(B(\t,|\t-\vp|))},
\qquad |\t-\vp|>2|\vp-\vp'|, \label{sm2}
\end{align}
for some fixed $\gamma > 0$. 
Notice that the right-hand side of \eqref{gr} is always larger than the constant $1/\mu_{\ab}(0,\pi)$, which will be used frequently in the sequel without further mention. 
Further, notice that the bounds \eqref{sm1} and \eqref{sm2} imply analogous estimates 
with any $0<\gamma'<\gamma$ instead of $\gamma$.
Moreover, observe that in these formulas the ball (interval) $B(\t,|\t-\vp|)$ can be replaced by $B(\vp,|\vp-\t|)$, 
in view of the doubling property of $\mu_{\ab}$.

A linear operator $T$ assigning to each $f\in L^2(d\mu_{\ab})$
a strongly measurable $\mathbb{B}$-valued function $Tf$ on $(0,\pi)$. 
Then $T$ is said to be a (vector-valued)
Calder\'on-Zygmund operator in the sense of the space $((0,\pi),\mu_{\ab},|\cdot|)$ associated with
$\mathbb{B}$ if
\begin{itemize}
\item[(A)] $T$ is bounded from $L^2(d\mu_{\ab})$ to $L^2_{\mathbb{B}}(d\mu_{\ab})$,
\item[(B)] there exists a standard $\mathbb{B}$-valued kernel $K(\t,\vp)$ such that
$$
Tf(\t) = \int_0^{\pi} K(\t,\vp) f(\vp)\, d\mu_{\ab}(\vp), \qquad \textrm{a.a.}\; \t \notin \support f,
$$
for every $f \in L^{\infty}((0,\pi))$.
\end{itemize}
Here integration of $\mathbb{B}$-valued functions is understood in Bochner's sense, and
$L^2_{\mathbb{B}}(d\mu_{\ab})$ is the Bochner-Lebesgue space of all $\mathbb{B}$-valued 
$d\mu_{\ab}$-square integrable functions on $(0,\pi)$.

It is well known that a large part of the classical theory of Calder\'on-Zygmund operators remains valid,
with appropriate adjustments, when the underlying space is of homogeneous type and the associated kernels
are vector-valued, see for instance \cite{RRT,RT}. In particular, if $T$ is a Calder\'on-Zygmund
operator in the sense of $((0,\pi),\mu_{\ab},|\cdot|)$ associated with a Banach space $\mathbb{B}$,
then its mapping properties in weighted $L^p$ spaces follow from the general theory. 

Obviously, the mixed Lusin area integrals $S^{\ab}_{M,N}$ and $\mathcal{S}^{\ab}_{M,N}$ are nonlinear. However, they can be written as 
\begin{align*}
S_{M,N}^{\ab}f(\t) 
&= 
\big\| \partial_{t}^M \delta^N \mathcal{H}_t^{\ab}f(\t + \eta)
\,
\chi_{ \{ \t + \eta \in (0,\pi) \} }  \sqrt{\ot} 
  \big\|_{L^2(\Gamma, t^{2M+2N-1}d\eta dt)},
\qquad \t \in (0,\pi), \\
\mathcal{S}_{M,N}^{\ab}f(\t) 
&= 
\big\|  \partial_{t}^M D^N \mathcal{H}_t^{\ab}f(\t + \eta)
\,
\chi_{ \{ \t + \eta \in (0,\pi) \} }  \sqrt{\ot} 
  \big\|_{L^2(\Gamma, t^{2M+2N-1}d\eta dt)},
\qquad \t \in (0,\pi),
\end{align*}
where the function $\ottt$ is given by
\[
\ot = \frac{ \big( \sin\frac{\t+\eta}2 \big)^{2\a+1} \, 
\big( \cos\frac{\t+\eta}2 \big)^{2\b+1}  }{V_t^{\ab}(\t)}, 
\qquad 
\eta \in \R, \quad t>0,
\quad \t,\t+\eta \in (0,\pi).
\]
This, in turn, shows that these operators can be viewed as vector-valued linear operators taking values in $\mathbb{B} = L^2(\Gamma, t^{2M+2N-1}d\eta dt)$.
Further, note that the formal computation suggests that the kernels associated with $S^{\ab}_{M,N}$ and $\mathcal{S}^{\ab}_{M,N}$ are given by
\begin{align*}
S^{\ab}_{M,N}(\t,\vp) = 
\big\{ S^{\ab}_{M,N,\eta,t}(\t,\vp)
\big\}_{(\eta,t) \in \Gamma}, \qquad
\mathcal{S}^{\ab}_{M,N}(\t,\vp) = 
\big\{ \mathcal{S}^{\ab}_{M,N,\eta,t}(\t,\vp)
\big\}_{(\eta,t) \in \Gamma},
\end{align*}
where $M,N \in \N$ are such that $M+N>0$, and
\begin{align*}
S^{\ab}_{M,N,\eta,t}(\t,\vp) & =
\partial_t^M \delta_\psi^N H_t^{\ab}(\psi,\vp) \big|_{\psi = \t + \eta} 
\, \chi_{ \{ \t + \eta \in (0,\pi) \} }
\, \sqrt{\ot}, \\
\mathcal{S}^{\ab}_{M,N,\eta,t}(\t,\vp) & =
\partial_t^M D_\psi^N H_t^{\ab}(\psi,\vp) \big|_{\psi = \t + \eta}
\, \chi_{ \{ \t + \eta \in (0,\pi) \} }
\, \sqrt{\ot}.
\end{align*}

The main result of the paper reads as follows.

\begin{thm}\label{thm:CZ}
Let $\ab > -1$ and $M,N \in \N$ be such that $M+N > 0$. Then the 
mixed Lusin area integrals $S^{\ab}_{M,N}$ and $\mathcal{S}^{\ab}_{M,N}$ can be viewed as vector-valued Calder\'on-Zygmund operators in the sense of $((0,\pi),\mu_{\ab},|\cdot|)$ associated with the Banach spaces
$\mathbb{B} = L^2(\Gamma, t^{2M+2N-1}d\eta dt)$.
\end{thm}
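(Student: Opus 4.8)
The plan is to verify the two defining conditions (A) and (B) of a vector-valued Calderón–Zygmund operator for both $S^{\ab}_{M,N}$ and $\mathcal{S}^{\ab}_{M,N}$; since the two operators differ only in that $\delta^N$ is replaced by $D^N$, I treat them in parallel, writing $T$ for either one and $\mathbb{B} = L^2(\Gamma, t^{2M+2N-1}d\eta dt)$.

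For condition (A) I would first expand $\|Tf\|_{L^2_{\mathbb{B}}(d\mu_{\ab})}^2$ as an integral over $(0,\pi)\times\Gamma$ and, for each fixed $t$, perform the change of variables $\vp = \t+\eta$, which converts the factor $\big(\sin\frac{\t+\eta}2\big)^{2\a+1}\big(\cos\frac{\t+\eta}2\big)^{2\b+1}d\eta$ in $\ot$ into $d\mu_{\ab}(\vp)$. After Fubini's theorem this reduces the squared norm to
\[
\int_0^\infty t^{2M+2N-1}\int_0^\pi \big|\partial_t^M\delta^N\mathcal{H}_t^{\ab}f(\vp)\big|^2\bigg(\int_0^\pi\frac{\chi_{\{|\vp-\t|<t\}}}{V_t^{\ab}(\t)}\,d\mu_{\ab}(\t)\bigg)\,d\mu_{\ab}(\vp)\,dt .
\]
The inner integral is $\simeq 1$ uniformly in $(\vp,t)$, because $V_t^{\ab}(\t)\simeq V_t^{\ab}(\vp)$ whenever $|\t-\vp|<t$, a standard consequence of the doubling property of $\mu_{\ab}$. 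Thus $\|Tf\|_{L^2_{\mathbb{B}}}$ is comparable to the $L^2(d\mu_{\ab})$ norm of the vertical mixed $g$-function $\big(\int_0^\infty t^{2M+2N-1}|\partial_t^M\delta^N\mathcal{H}_t^{\ab}f|^2\,dt\big)^{1/2}$, whose $L^2$-boundedness follows from the spectral theorem, using the factorization $\J^{\ab}=\delta^*\delta+\lam_0^{\ab}$ and the action of $\delta$ on the $\P_n^{\ab}$, and is of the same type as the mixed $g$-functions already treated in \cite{NoSjSz}. The identical computation with $D^N$ in place of $\delta^N$ handles $\mathcal{S}^{\ab}_{M,N}$.

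For condition (B) I would confirm that the $\mathbb{B}$-valued kernel is the one displayed just before the theorem, namely $S^{\ab}_{M,N}(\t,\vp) = \{\partial_t^M\delta_\psi^N H_t^{\ab}(\psi,\vp)|_{\psi=\t+\eta}\,\chi_{\{\t+\eta\in(0,\pi)\}}\sqrt{\ot}\}_{(\eta,t)\in\Gamma}$ and its $D^N$ analogue. Given $f\in L^\infty((0,\pi))$ and $\t\notin\support f$, the representation $Tf(\t)=\int_0^\pi K(\t,\vp)f(\vp)\,d\mu_{\ab}(\vp)$, understood as a Bochner integral in $\mathbb{B}$, is obtained by differentiating the integral formula for $\mathcal{H}_t^{\ab}f$ under the integral sign and then interchanging the $d\mu_{\ab}(\vp)$ integration with the $\mathbb{B}$-norm; both steps are justified by Fubini and dominated convergence once the growth estimate \eqref{gr} is available, since $f$ is bounded and supported away from $\t$. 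It then remains to prove that $K$ satisfies the standard estimates \eqref{gr}, \eqref{sm1} and \eqref{sm2} in the sense of $((0,\pi),\mu_{\ab},|\cdot|)$, which is exactly the content of Theorem \ref{thm:kerest}.

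The main obstacle lies entirely in those kernel estimates. The difficulty is that $\partial_t^M\delta_\psi^N H_t^{\ab}$ (and its $D^N$ counterpart), restricted to the cone and weighted by $\sqrt{\ot}$, must be controlled in the $\mathbb{B}$-norm, which is itself an $L^2$ norm in the cone variables $(\eta,t)$, after taking increments in $\t$ or in $\vp$ for the smoothness bounds \eqref{sm1}, \eqref{sm2}. The plan here is to extend the method of \cite{NoSj,NoSjSz}, based on the convenient integral representation of the Jacobi–Poisson kernel established in \cite{NoSjSz}, so as to accommodate the additional $t$- and cone-integrations together with the mixed derivatives; the required generalization, analogous to the tools developed in \cite{CaSz,NoStSz,TZS}, is the technical core of Section \ref{sec:kerest}. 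Once Theorem \ref{thm:kerest} is in hand, conditions (A) and (B) together yield the assertion, and the weighted $L^p$ mapping properties then follow from the general Calderón–Zygmund theory on spaces of homogeneous type.
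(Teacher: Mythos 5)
Your proposal follows essentially the same route as the paper: condition (A) is obtained by the same Fubini/change-of-variables reduction (the paper packages this as Lemma \ref{lem:Omega}) to the $L^2$-boundedness of the corresponding mixed $g$-functions, which the paper likewise takes from \cite{NoSjSz} and \cite{L}; condition (B) is the same kernel-association argument (the paper cites the analogous \cite[Proposition 2.5]{TZS}) combined with the standard estimates of Theorem \ref{thm:kerest}, whose proof both you and the paper defer to the technical Section \ref{sec:kerest}. The only minor caveat is that your ``spectral theorem'' justification for the $g$-function bound is too quick for $N\ge 2$ with $\delta^N$ (iterates of $\delta$ do not act diagonally on the $\mathcal{P}_n^{\ab}$), but the citation you also give covers this.
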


The proof of Theorem \ref{thm:CZ} splits naturally into the following two results.

\begin{pro}\label{pro:L2ass}
Let $\ab > -1$ and $M,N \in \N$ be such that $M+N > 0$. Then the 
mixed Lusin area integrals $S^{\ab}_{M,N}$ and $\mathcal{S}^{\ab}_{M,N}$ are bounded on $L^2(d\mu_{\ab})$. Moreover, $S^{\ab}_{M,N}$ and $\mathcal{S}^{\ab}_{M,N}$, 
viewed as vector-valued operators, are associated in the Calder\'on-Zygmund theory sense with the kernels $S^{\ab}_{M,N}(\t,\vp)$ and $\mathcal{S}^{\ab}_{M,N}(\t,\vp)$, respectively.
\end{pro}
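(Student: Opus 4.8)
The plan is to prove the two assertions separately, and to begin the $L^2$ part by passing from the conical square function to the associated vertical Littlewood--Paley--Stein $g$-function via Fubini's theorem. Writing $\|S_{M,N}^{\ab}f\|^2_{L^2(d\mu_{\ab})}$ as a triple integral and interchanging the order of integration, the inner integral becomes, for fixed $(\eta,t)$, the quantity $\int_{\{|\t-\eta|<t\}}d\mu_{\ab}(\t)\slash V_t^{\ab}(\t)$. Since $\t\in B(\eta,t)$ forces $B(\t,t)\subseteq B(\eta,2t)$ and $B(\eta,t)\subseteq B(\t,2t)$, the doubling property of $\mu_{\ab}$ gives $V_t^{\ab}(\t)\simeq V_t^{\ab}(\eta)$ uniformly, so this inner integral is $\simeq\mu_{\ab}(B(\eta,t))\slash V_t^{\ab}(\eta)=1$. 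I therefore obtain
\[
\|S_{M,N}^{\ab}f\|^2_{L^2(d\mu_{\ab})}\simeq \int_0^\infty\!\!\int_0^\pi t^{2M+2N-1}\big|\partial_t^M\delta^N\mathcal{H}_t^{\ab}f(\eta)\big|^2\,d\mu_{\ab}(\eta)\,dt,
\]
and likewise for $\mathcal{S}_{M,N}^{\ab}$ with $\delta^N$ replaced by $D^N$. Thus the $L^2$-boundedness of the two Lusin area integrals is equivalent to that of the corresponding mixed $g$-functions.

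For $\mathcal{S}_{M,N}^{\ab}$ the resulting $g$-function is handled directly by the spectral theorem, because the symmetrized derivative has the clean algebraic form $D^{2j}=(\delta^*\delta)^j=(\J^{\ab}-\lam_0^{\ab})^j$ and $D^{2j+1}=\delta(\J^{\ab}-\lam_0^{\ab})^j$. Integrating by parts (the boundary terms vanish by the smoothness and decay of $\mathcal{H}_t^{\ab}f$) I get, in both parities,
\[
\int_0^\pi \big|D^N\mathcal{H}_t^{\ab}f\big|^2\,d\mu_{\ab}
=\big\langle (\J^{\ab}-\lam_0^{\ab})^N\mathcal{H}_t^{\ab}f,\,\mathcal{H}_t^{\ab}f\big\rangle_{d\mu_{\ab}}.
\]
Since $(\J^{\ab}-\lam_0^{\ab})^N$ is diagonal in the basis $\{\P_n^{\ab}\}$, inserting $\partial_t^M$ and using $\int_0^\infty t^{2M+2N-1}e^{-2t\sqrt{\lam_n^{\ab}}}\,dt\simeq (\lam_n^{\ab})^{-(M+N)}$ (the terms with $\lam_n^{\ab}=0$ vanish identically because $M+N>0$) yields
\[
\|\mathcal{S}_{M,N}^{\ab}f\|^2_{L^2(d\mu_{\ab})}\simeq \sum_{n\ge0}\Big(1-\tfrac{\lam_0^{\ab}}{\lam_n^{\ab}}\Big)^N\big|\langle f,\P_n^{\ab}\rangle_{d\mu_{\ab}}\big|^2\le \|f\|^2_{L^2(d\mu_{\ab})}.
\]
For $S_{M,N}^{\ab}$ the pure iterate does not diagonalize: $\delta^N\P_n^{\ab}$ is proportional to $\P_{n-N}^{\alpha+N,\beta+N}$, which is orthogonal with respect to $d\mu_{\alpha+N,\beta+N}$ rather than $d\mu_{\ab}$, so for $N\ge2$ the analogous quadratic form is genuinely non-diagonal and no clean Plancherel identity is available. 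Here I would instead invoke the $L^2(d\mu_{\ab})$-boundedness of the mixed $g$-functions established in \cite{NoSjSz}.

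It remains to prove the kernel representation. I fix $f\in L^\infty((0,\pi))$ and $\t\notin\support f$. The scalar identity $\mathcal{H}_t^{\ab}f(\eta)=\int_0^\pi H_t^{\ab}(\eta,\vp)f(\vp)\,d\mu_{\ab}(\vp)$ may be differentiated under the integral sign, $M$ times in $t$ and $N$ times through $\delta$ (or $D$) in the first spatial variable, for each fixed $(\eta,t)$; this is justified by the smoothness of $H_t^{\ab}$ together with dominated convergence, using $f\in L^\infty$ and the local integrability in $\vp$ of the relevant kernel derivatives. This produces, pointwise in $(\eta,t)$, the scalar identity whose right-hand side is the integrand defining $S_{M,N,\eta,t}^{\ab}(\t,\vp)$ (respectively $\mathcal{S}_{M,N,\eta,t}^{\ab}(\t,\vp)$). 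To upgrade this to the $\mathbb{B}$-valued (Bochner) identity $S_{M,N}^{\ab}f(\t)=\int_0^\pi S_{M,N}^{\ab}(\t,\vp)f(\vp)\,d\mu_{\ab}(\vp)$, I interchange the $\vp$-integration with the $L^2(\Gamma,t^{2M+2N-1}d\eta\,dt)$ structure by a Bochner--Fubini argument, whose hypothesis is the absolute integrability $\int_0^\pi\|S_{M,N}^{\ab}(\t,\vp)\|_{\mathbb{B}}\,|f(\vp)|\,d\mu_{\ab}(\vp)<\infty$. The latter follows from the growth estimate \eqref{gr} (established in Theorem \ref{thm:kerest}), since $\support f$ is separated from $\t$ and $\mu_{\ab}(0,\pi)<\infty$; alternatively it can be obtained from crude bounds on $\partial_t^M\delta^N H_t^{\ab}$ away from the diagonal. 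Strong measurability of $\t\mapsto S_{M,N}^{\ab}f(\t)$ follows from the continuity of the integrand.

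I expect the principal difficulties to be twofold. First, within the $L^2$ part, $S_{M,N}^{\ab}$ is the delicate operator: because $\delta^N$ shifts the Jacobi parameters and so destroys orthogonality with respect to $d\mu_{\ab}$, the transparent spectral computation available for $\mathcal{S}_{M,N}^{\ab}$ has no analogue, and one must fall back on the finer $g$-function estimates of \cite{NoSjSz}. Second, in the kernel association the genuinely technical point is the rigorous justification of differentiation under the integral sign and of the Bochner--Fubini interchange; both hinge on sufficient integrability of the kernel, and this is where the (forward) dependence on the growth estimate of Theorem \ref{thm:kerest} enters. The remaining manipulations --- the Fubini reduction and the spectral computation for $D^N$ --- are routine.
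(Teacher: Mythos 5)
Your proposal is correct and, in its skeleton, coincides with the paper's proof: a Fubini-plus-doubling reduction of $\|S^{\ab}_{M,N}f\|_{L^2(d\mu_{\ab})}$ to the corresponding vertical $g$-function (this is precisely the role of Lemma \ref{lem:Omega} in the paper), an appeal to \cite[Corollary 5.2]{NoSjSz} for the non-symmetrized case, and a kernel-association argument in the spirit of \cite[Proposition 2.5]{TZS} whose integrability hypothesis is supplied by the growth bound \eqref{gr} of Theorem \ref{thm:kerest}. The one genuine divergence is that for $\mathcal{S}^{\ab}_{M,N}$ you replace the paper's citation of \cite[Theorem 3.2 and Proposition 3.9]{L} by a self-contained Plancherel computation based on $D^{2j}=(\delta^*\delta)^j=(\J^{\ab}-\lam_0^{\ab})^j$; this is an attractive simplification, giving the explicit multiplier $(1-\lam_0^{\ab}\slash\lam_n^{\ab})^N\le 1$, at the modest cost of justifying the integration by parts $\langle\delta u,v\rangle_{d\mu_{\ab}}=\langle u,\delta^*v\rangle_{d\mu_{\ab}}$ near the endpoints, which is delicate only when $\a<-1\slash2$ or $\b<-1\slash2$ (where the weight blows up) but is rescued by the vanishing of $D^{2k+1}\mathcal{H}_t^{\ab}f$ like $\sin\t$ there. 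Two minor points: $\delta\P_n^{\ab}$ is proportional to $\sin\frac{\t}{2}\cos\frac{\t}{2}\,\P_{n-1}^{\a+1,\b+1}$ rather than to $\P_{n-1}^{\a+1,\b+1}$ itself (these products are still orthogonal in $L^2(d\mu_{\ab})$, so your heuristic that non-diagonality only sets in at $N\ge 2$ is nonetheless correct); and the paper lists the $L^2$-boundedness as a second ingredient of the kernel association, whereas your direct route via differentiation under the integral sign and Bochner--Fubini needs only \eqref{gr} --- both are legitimate, and in either case the forward dependence on Theorem \ref{thm:kerest} is harmless since that theorem is proved independently.
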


\begin{proof}
By Lemma \ref{lem:Omega} below we see that proving the $L^2(d\mu_{\ab})$-boundedness of $S^{\ab}_{M,N}$ and $\mathcal{S}^{\ab}_{M,N}$ is equivalent to showing an analogous property for the related $g$-functions $g^{\ab}_{M,N}$ and $\big( G^{\ab}_{M,N} \big)^+$ investigated in \cite{NoSjSz} and \cite{L}, respectively. 
This, however, was recently established, even on weighted $L^p$, $1 < p < \infty$, spaces, in \cite[Corollary 5.2]{NoSjSz} and \cite[Theorem 3.2 and Proposition 3.9]{L}, respectively. 

To prove the kernel associations one can proceed as in \cite[Proposition 2.5 on pp.\,1528-1529]{TZS}, where similar fact was proved for the first order Lusin area integrals in the Laguerre-Dunkl context. The crucial ingredients needed in the reasoning are the just explained $L^2(d\mu_{\ab})$-boundedness of the operators in question and the standard estimates for the kernels $S^{\ab}_{M,N}(\t,\vp)$ and $\mathcal{S}^{\ab}_{M,N}(\t,\vp)$. The latter fact is justified in Theorem \ref{thm:kerest} below. The details are fairly standard and thus left to the reader.
\end{proof}

\begin{thm}\label{thm:kerest}
Assume that $\ab > -1$, and let $M,N \in \N$ be such that $M+N > 0$. Then the kernels $S^{\ab}_{M,N}(\t,\vp)$ and $\mathcal{S}^{\ab}_{M,N}(\t,\vp)$ satisfy the standard estimates with the Banach spaces $\mathbb{B} = L^2(\Gamma, t^{2M+2N-1}d\eta dt)$ and with any 
$\gamma \in (0,1/2]$ satisfying $\gamma < \a \wedge \b + 1$.
\end{thm}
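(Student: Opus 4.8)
The plan is to reduce everything to pointwise estimates for the scalar kernels and then convert those into the Banach-space norm estimates defining \eqref{gr}, \eqref{sm1} and \eqref{sm2}. The key observation is that the $\mathbb{B}$-norm of the kernel is, by definition of $\mathbb{B} = L^2(\Gamma, t^{2M+2N-1}d\eta dt)$, an integral over the cone $\Gamma$ of the squared scalar kernel. Thus the first step is to record sharp pointwise bounds for $\partial_t^M \delta_\psi^N H_t^{\ab}(\psi,\vp)$ (and for the interlaced derivative $D^N$) at $\psi = \t+\eta$, together with the factor $\sqrt{\ot}$. For this I would invoke the integral representation of the Jacobi-Poisson kernel from \cite{NoSjSz} and the method of estimating its derivatives developed there, generalized so as to cover simultaneous differentiation in $t$ and in the spatial variable and the additional $\eta$-dependence coming from the cone. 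The expected output of this step is a bound of the form
\[
\big| \partial_t^M \delta_\psi^N H_t^{\ab}(\psi,\vp)\big|_{\psi=\t+\eta}\big|
\lesssim \frac{1}{(\,|\t-\vp| + |\eta| + t\,)^{M+N}}\,\frac{1}{\mu_{\ab}\big(B(\t, |\t-\vp|+t)\big)},
\]
valid on $\Gamma$, with an analogous difference bound carrying a factor $\big(|\t-\t'|/(|\t-\vp|+t)\big)^{\gamma}$ or $\big(|\vp-\vp'|/(|\t-\vp|+t)\big)^{\gamma}$ after one spatial difference is taken.

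Granting such pointwise bounds, the second step is the passage to $\mathbb{B}$. For the growth estimate \eqref{gr} I would substitute the pointwise bound into $\|S^{\ab}_{M,N}(\t,\vp)\|_{\mathbb{B}}^2 = \int_{\Gamma} \big| S^{\ab}_{M,N,\eta,t}(\t,\vp)\big|^2\, t^{2M+2N-1}\,d\eta\,dt$ and carry out the integration over the cone. The weight $t^{2M+2N-1}$ is designed precisely to cancel against the $(M+N)$-fold decay squared, so that after integrating in $\eta$ over $|\eta|<t$ and then in $t$ one is left with $1/\mu_{\ab}(B(\t,|\t-\vp|))^2$; taking the square root gives \eqref{gr}. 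The smoothness estimates \eqref{sm1}, \eqref{sm2} follow the same template, with the extra $\gamma$-power of $|\t-\t'|/(|\t-\vp|)$ (respectively $|\vp-\vp'|/(|\t-\vp|)$) surviving the cone integration. Here one must be careful with the indicator $\chi_{\{\t+\eta\in(0,\pi)\}}$ and with comparing the moving balls $V_t^{\ab}(\t)$ and $V_t^{\ab}(\t')$, but these are handled by the doubling property of $\mu_{\ab}$ together with the restriction $|\t-\vp|>2|\t-\t'|$, and the role of the hypothesis $\gamma \le 1/2$ and $\gamma < \a\wedge\b+1$ will be to guarantee that certain $\eta$- and $\vp$-integrals near the endpoints $0$ and $\pi$ converge.

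The main obstacle is the first step: producing the sharp pointwise kernel bounds uniformly on the whole cone $\Gamma$, accounting simultaneously for the extra $t$-derivatives, the spatial derivatives $\delta^N$ or the interlaced $D^N$, and the shift $\psi = \t+\eta$ which couples the cone variable $\eta$ to the differentiation point. The symmetrized derivative $D^N$ is the more delicate of the two, since interlacing $\delta$ with $\delta^*$ introduces the singular coefficients $\cot\frac{\t}{2}$ and $\tan\frac{\t}{2}$, whose growth near the endpoints $0,\pi$ must be controlled against the measure $\mu_{\ab}$; this is exactly where the admissibility constraint on $\gamma$ enters. I would organize this step by recording in a sequence of preparatory lemmas (the contents of Section \ref{sec:kerest}) the behaviour of the relevant integral representation under each type of derivative, reducing the whole statement to a finite collection of scalar estimates of a uniform shape, and then invoke the cone-integration scheme above uniformly for every term. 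The conversion from scalar to $\mathbb{B}$-valued, by contrast, is routine once the pointwise bounds are in hand, and I would leave its more tedious cases to the reader in the manner already adopted for Proposition \ref{pro:L2ass}.
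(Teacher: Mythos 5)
Your overall architecture (pointwise bounds for the differentiated Poisson kernel, then integration over the cone) matches the paper's, but there are two genuine gaps. First, and most importantly, your treatment of the smoothness estimate \eqref{sm1} overlooks the real source of difficulty: when $\t$ is replaced by $\t'$, the kernel changes not only through the evaluation point $\psi=\t+\eta$ but also through the indicator $\chi_{\{\t+\eta\in(0,\pi)\}}$ and through the weight $\sqrt{\ot}$, and the difference $\big|\sqrt{\ot}-\sqrt{\ott}\big|$ cannot be handled by ``doubling together with $|\t-\vp|>2|\t-\t'|$.'' One needs a genuine H\"older-type bound in $L^2(d\eta)$ of the form
\begin{equation*}
\int_{|\eta|<t}\Big|\sqrt{\ot}-\sqrt{\ott}\Big|^2\,d\eta\lesssim\Big(\tfrac{|\t-\t'|}{t}\Big)^{2\gamma},
\end{equation*}
together with a companion estimate for the mismatch set where $\t+\eta\in(0,\pi)$ but $\t'+\eta\notin(0,\pi)$ (Lemmas \ref{lem:Omegadiff} and \ref{lem:Omegaprime} in the paper). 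These are the most technical lemmas in the argument, and they are precisely where the restrictions $\gamma\le 1/2$ (from the elementary inequality $|x-y|^{\xi}\lesssim|x^{\xi}-y^{\xi}|$ with $\xi=1/(2\gamma)\ge 1$) and $\gamma<\a\wedge\b+1$ (from integrability of $(\sin\frac{\cdot}{2})^{2\a+1-2\gamma}(\cos\frac{\cdot}{2})^{2\b+1-2\gamma}$ near the endpoints) actually enter. Your proposal attributes the constraint on $\gamma$ instead to the singular coefficients of $D^N$; this is incorrect, since the paper disposes of $D^N$ entirely by the identity $D_\t^2H_t^{\ab}=(\partial_t^2-\lam_0^{\ab})H_t^{\ab}$, which reduces every $D^N$-estimate to pure $\partial_\t$- and $\partial_t$-derivatives, so that the two families of kernels are treated identically and the symmetrized case introduces no extra delicacy whatsoever.

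Second, your step 1 asks for more than the method delivers. The paper does not prove a sharp pointwise bound $\lesssim(|\t-\vp|+t)^{-(M+N)}\mu_{\ab}(B(\t,|\t-\vp|+t))^{-1}$ for the differentiated kernel; it proves only the majorization by $\Upsilon^{\ab}_{2M+2N,L+P}(t,\t,\vp)$ and then converts this to the standard-estimate shape \emph{after} taking the $L^2((0,\pi),t^{W-1}dt)$ norm (Lemma \ref{lem:finbridgep=2}), with the long-time range $t\ge\pi$ handled separately. The cone variable $\eta$ is then eliminated for free, since $t^2+q(\t+\eta,\vp,u,v)\simeq t^2+q(\t,\vp,u,v)$ on $\Gamma$ makes the majorant independent of $\eta$ and the $\eta$-integration is absorbed by the normalization $\int_{|\eta|<t}\chi_{\{\t+\eta\in(0,\pi)\}}\ot\,d\eta=1$. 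If you insist on genuinely sharp pointwise bounds uniform in $(\eta,t)\in\Gamma$ for all admissible $\ab>-1$, you are taking on a substantially harder (and unnecessary) task; if you only mean the $\Upsilon$-type majorants, you should say so and supply the bridge to \eqref{gr}--\eqref{sm2}, which is not automatic.
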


The proof of Theorem \ref{thm:kerest}, which is the most technical part of the paper, is located in Section~\ref{sec:kerest}.

An important consequence of Theorem \ref{thm:CZ} is the following result.

\begin{cor}
Let $\ab > -1$ and $M,N \in \N$ be such that $M+N > 0$. Then the 
mixed Lusin area integrals $S^{\ab}_{M,N}$ and $\mathcal{S}^{\ab}_{M,N}$ are bounded on $L^p(wd\mu_{\ab})$, $w \in A_p^{\ab}$, $1 < p < \infty$, and from $L^1(wd\mu_{\ab})$ to $L^{1, \infty} (wd\mu_{\ab})$, $w \in A_1^{\ab}$.
\end{cor}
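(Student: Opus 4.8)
The plan is to deduce the corollary directly from Theorem~\ref{thm:CZ} by combining the general weighted theory of vector-valued Calder\'on-Zygmund operators on spaces of homogeneous type with the observation, already recorded before Theorem~\ref{thm:CZ}, that each mixed Lusin area integral is, pointwise, the $\mathbb{B}$-norm of an associated \emph{linear} vector-valued operator. All the genuine analytic content has by now been absorbed into Proposition~\ref{pro:L2ass} and Theorem~\ref{thm:kerest} (hence into Theorem~\ref{thm:CZ}), so the remaining task is a routine transfer argument.

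First I would recall the linearization displayed just before Theorem~\ref{thm:CZ}. Denote by $T$ the linear operator that assigns to $f$ the $\mathbb{B}$-valued function
\[
Tf(\t) = \partial_{t}^M \delta^N \mathcal{H}_t^{\ab}f(\t + \eta)\, \chi_{ \{ \t + \eta \in (0,\pi) \} } \sqrt{\ot}, \qquad (\eta,t) \in \Gamma,
\]
with $\mathbb{B} = L^2(\Gamma, t^{2M+2N-1}d\eta dt)$, and analogously the operator with $D^N$ in place of $\delta^N$ for the $\mathcal{S}$-variant. The very definition of the operators then yields the pointwise identities $S^{\ab}_{M,N}f(\t) = \|Tf(\t)\|_{\mathbb{B}}$ (and similarly for $\mathcal{S}^{\ab}_{M,N}$), whence for every weight $w$ and every exponent $p$,
\[
\big\| S^{\ab}_{M,N}f \big\|_{L^p(wd\mu_{\ab})} = \big\| Tf \big\|_{L^p_{\mathbb{B}}(wd\mu_{\ab})},
\]
and likewise at the level of the weak-type quasinorm. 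In this way every mapping property of the nonlinear operator is inherited verbatim from the corresponding property of the linear vector-valued operator $T$.

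Next I would invoke the conclusions of the Calder\'on-Zygmund theory in the sense of the space of homogeneous type $((0,\pi),\mu_{\ab},|\cdot|)$, as developed for vector-valued kernels in \cite{RRT,RT}. By Theorem~\ref{thm:CZ} the operator $T$ satisfies conditions (A) and (B): it is bounded from $L^2(d\mu_{\ab})$ to $L^2_{\mathbb{B}}(d\mu_{\ab})$ and is associated with a $\mathbb{B}$-valued standard kernel. The general weighted theory then furnishes boundedness of $T$ from $L^p(wd\mu_{\ab})$ to $L^p_{\mathbb{B}}(wd\mu_{\ab})$ for every $w \in A_p^{\ab}$ and $1 < p < \infty$, together with the weak-type bound from $L^1(wd\mu_{\ab})$ to $L^{1,\infty}_{\mathbb{B}}(wd\mu_{\ab})$ for every $w \in A_1^{\ab}$. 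Transferring these through the norm identities above gives precisely the asserted $L^p(wd\mu_{\ab})$ boundedness and the $L^1$-to-$L^{1,\infty}$ estimate for both $S^{\ab}_{M,N}$ and $\mathcal{S}^{\ab}_{M,N}$.

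I do not expect any substantive obstacle at this stage. The only point meriting a word of care is the passage from the linear vector-valued estimates to the nonlinear scalar ones; this is licensed exactly by the pointwise identity $\|Tf(\t)\|_{\mathbb{B}} = S^{\ab}_{M,N}f(\t)$, together with the fact that the weighted $L^p$ and weak-type functionals see $Tf$ only through its pointwise $\mathbb{B}$-norm. Hence the corollary follows by quoting the general theory and applying this transfer, and the routine verification may safely be left to the reader.
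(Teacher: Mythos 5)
Your overall route is the same as the paper's: both deduce the corollary by feeding Theorem \ref{thm:CZ} into the general weighted theory of vector-valued Calder\'on--Zygmund operators on spaces of homogeneous type, and the linearization $S^{\ab}_{M,N}f(\t)=\|Tf(\t)\|_{\mathbb{B}}$ that you spell out is exactly the device already built into the statement of Theorem \ref{thm:CZ}, so that part of your argument is correct and needs no elaboration.

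However, you have passed over the one point on which the paper's proof actually spends its words, and you have declared a different (and unproblematic) step to be the only delicate one. The general Calder\'on--Zygmund theory produces bounded \emph{extensions}: it yields the weighted $L^p$ and weak type $(1,1)$ bounds for the operator obtained by continuous extension from a dense subclass such as $L^2(d\mu_{\ab})\cap L^p(wd\mu_{\ab})$. The corollary, by contrast, concerns $S^{\ab}_{M,N}$ and $\mathcal{S}^{\ab}_{M,N}$ as defined pointwise on all of $L^p(wd\mu_{\ab})$, $w\in A_p^{\ab}$, through the Jacobi--Poisson integral. These two objects coincide a priori only on the dense subclass, so one must still identify the abstract extension with the concretely defined operator; this is precisely what the paper does by invoking arguments as in \cite[Corollary 2.5]{NoSj}. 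Your transfer through the identity $\|Tf(\t)\|_{\mathbb{B}}=S^{\ab}_{M,N}f(\t)$ is legitimate only after this identification, because the conclusion of the general theory is not, as stated, about the pointwise-defined operator on the whole weighted space. The gap is standard and repairable (approximate $f\in L^p(wd\mu_{\ab})$ by functions in the dense subclass and use the pointwise convergence properties of $\mathcal{H}_t^{\ab}f$ recalled in Section \ref{sec:prel}), but as written your proof establishes boundedness of the extensions rather than of the operators named in the statement.
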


\begin{proof}
The fact that $S^{\ab}_{M,N}$ and $\mathcal{S}^{\ab}_{M,N}$ extend to bounded operators on $L^p(wd\mu_{\ab})$, $w \in A_p^{\ab}$, $1 < p < \infty$, and from $L^1(wd\mu_{\ab})$ to $L^{1, \infty} (wd\mu_{\ab})$, $w \in A_1^{\ab}$ follows from the (vector-valued) Calder\'on-Zygmund theory. Therefore it suffices to justify that these extensions coincide with the original definitions. The latter, however, can be done by using arguments similar to those mentioned in the proof of \cite[Corollary 2.5]{NoSj}.
Further details are left to the reader. 
\end{proof}

We note that some mapping properties for the first order vertical Lusin area integrals, i.e.\ with $M=1$ and $N = 0$, follow from results established in a general framework of spaces of homogeneous type. Using \cite[Theorem 1.1]{GX} we obtain, in particular, weighted weak type $(1,1)$ estimate for  $S^{\ab}_{1,0} = \mathcal{S}^{\ab}_{1,0}$ with all $A_1^{\ab}$ weights admitted. The assumptions imposed in \cite{GX} are indeed satisfied, since the Jacobi-heat kernel of the Jacobi-heat semigroup $\big\{ \exp(-t \mathcal{J}^{\ab}) \big\}_{t > 0}$ possesses the so-called Gaussian bound. The latter was recently established independently by Coulhon, Kerkyacharian, Petrushev in \cite[Theorem 7.2]{CKP} and by Nowak, Sj\"ogren in 
\cite[Theorem A]{NoSj2}.

\section{Kernel estimates} \label{sec:kerest}
This section is devoted to proving standard estimates for the kernels $S^{\ab}_{M,N}(\t,\vp)$ and $\mathcal{S}^{\ab}_{M,N}(\t,\vp)$ related to the Banach spaces $\mathbb{B} = L^2(\Gamma, t^{2M+2N-1}d\eta dt)$ and asserted in Theorem \ref{thm:kerest}.
We point out that a prominent role in our proof is played by the method of proving kernel estimates established in \cite{NoSj} under the restriction $\ab \ge -1/2$ and then generalized in \cite{NoSjSz} to all admissible type parameters $\ab >-1$. 
In those papers a convenient integral representation for the Jacobi-Poisson kernel was established, see \cite[(1) and Proposition 2.3]{NoSjSz}. This allowed the authors to elaborate a technique of estimating various kernels expressible via $H_{t}^{\ab}(\t,\vp)$ and in consequence to show that several fundamental harmonic analysis operators in the Jacobi context are (vector-valued) Calder\'on-Zygmund operators. 
However, the Lusin area integrals have more complex structure than the operators investigated in \cite{NoSj,NoSjSz} and thus to treat them we need to establish further generalization of this interesting method. To achieve this, we will adapt some intuitions and ideas from the contexts of the Dunkl harmonic oscillator \cite{TZS} and the Dunkl Laplacian \cite{CaSz}, where analogous techniques were developed. 
We emphasize that the analysis related to the restricted range of $\ab \ge -1/2$ is much simpler than the one concerning all $\ab > -1$. This is caused by the fact that for $\ab \ge -1/2$ the above mentioned integral representation for $H_{t}^{\ab}(\t,\vp)$ is less complicated. However, in the sequel we would like to treat all $\ab$ in a unified way.

To prove the kernel estimates stated in Theorem \ref{thm:kerest} we will need some preparatory results, which are gathered below. Some of them were obtained in the previous papers \cite{NoSj,NoSjSz}, but we recall them here for the sake of reader's convenience.
To state them we shall use the same notation as in \cite{NoSjSz}. 
For $\a > -1/2$, let $d\Pi_\a$ be the probability measure on the interval $[-1,1]$ defined by
\[
d\Pi_\a(u) = \frac{\Gamma(\a + 1)}{\sqrt{\pi} \Gamma(\a + 1/2)} 
(1-u^2)^{\a-1/2} \, du,
\]
and in the limit case $d\Pi_{-1/2}$ is the sum of point masses at $-1$ and $1$ divided by $2$.
Further, let
$$
d\abpiK
= d\Pi_{(\a+1)K - (1-K)/2}
= \begin{cases}
        d\Pi_{-1\slash 2}, & K=0 \\
        d\Pi_{\a + 1}, &  K=1
    \end{cases}, \qquad \a >-1,
$$
and put
\[
q(\t,\vp,u,v) = 1 - u \st \svp - v \ct \cvp, 
\qquad \t,\vp \in (0,\pi), \quad u,v \in [-1,1].
\]
Furthermore, for  $W, s \in \R$ fixed, we consider a function $\Upsilon_{W,s}^{\ab}(t,\t,\vp)$ defined on 
$(0,\pi] \times (0,\pi) \times (0,\pi)$ as follows.

\begin{itemize}
\item[(i)]
For $\a,\b \ge -1\slash 2$,
\begin{align*}
\Upsilon^{\ab}_{W,s}(t,\t,\vp) := 
\iint
\frac{\pia \, \pib }{(t^2 + q(\t,\vp,u,v))^{ \a+\b+3\slash 2 + W/4 + s\slash 2  }}.
\end{align*}
\item[(ii)]
For $-1 < \a < -1\slash 2 \le \b$,
\begin{align*}
\Upsilon^{\ab}_{W,s}(t,\t,\vp) := & \,
1 + \sum_{K=0,1} \sum_{k=0,1,2}
\bigg( \st + \svp \bigg)^{Kk} \\
& \qquad \times \iint
\frac{\piK \, \pib }{(t^2 + q(\t,\vp,u,v))^{ \a+\b+3\slash 2 + W/4 +Kk\slash 2 + s\slash 2  }}.
\end{align*}
\item[(iii)]
For $-1 < \b < -1\slash 2 \le \a$,
\begin{align*}
\Upsilon^{\ab}_{W,s}(t,\t,\vp) := & \,
1 + \sum_{R=0,1} \sum_{r=0,1,2}
\bigg( \ct + \cvp \bigg)^{Rr} \\
& \qquad \times \iint
\frac{\pia \, \piR }{(t^2 + q(\t,\vp,u,v))^{ \a+\b+3\slash 2 + W/4 +Rr\slash 2 + s\slash 2  }}.
\end{align*}
\item[(iv)] 
For $-1 < \a,\b < -1\slash 2$,
\begin{align*}
\Upsilon^{\ab}_{W,s}(t,\t,\vp) := & \,
1 + \sum_{K,R=0,1} \sum_{k,r=0,1,2}
\bigg( \st+\svp \bigg)^{Kk} \bigg( \ct + \cvp \bigg)^{Rr} \\
& \qquad \times \iint
\frac{\piK \, \piR }{(t^2 + q(\t,\vp,u,v))^{ \a+\b+3\slash 2 + W/4 +Kk\slash 2 + Rr\slash 2 + s\slash 2  }}.
\end{align*}
\end{itemize}
Notice that for any $\tau \in \R$ we have $\Upsilon^{\ab}_{W,s}(t,\t,\vp) = \Upsilon^{\ab}_{W-2\tau,s+\tau}(t,\t,\vp)$, which will frequently be used in the sequel.

Now we are ready to state the following lemma, which was partially proved in \cite[Corollary 3.5]{NoSjSz}.

\begin{lem} \label{lem:Ht1}
Let $M,N \in \N$ and $L,P \in \{0,1\}$ be fixed. Then,
\[
\big| 
	 \partial_\vp^L \partial_\t^P \partial_t^M \delta_\t^N
H_{t}^{\ab}(\t,\vp)  
\big| 
+
\big| 
	 \partial_\vp^L \partial_\t^P \partial_t^M D_\t^N
H_{t}^{\ab}(\t,\vp)  
\big| 
\lesssim 
\Upsilon^{\ab}_{2M+2N,L+P}(t,\t,\vp)
\]
uniformly in $t\in (0,\pi]$ and $\t,\vp \in (0,\pi)$.
\end{lem}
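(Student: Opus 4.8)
The plan is to treat the two summands on the left-hand side separately. The first one, involving $\delta_\t^N$, is exactly the estimate established in \cite[Corollary 3.5]{NoSjSz} (this is the part of the lemma already available), so nothing new is needed there. The whole difficulty lies in the summand carrying $D_\t^N$, and for it I would differentiate the integral representation of $H_t^{\ab}$ from \cite[Proposition 2.3]{NoSjSz} under the integral sign; recall that its integrand is a negative power of $t^2+q(\t,\vp,u,v)$ paired against $d\Pi_\a(u)\,d\Pi_\b(v)$, with a more elaborate multi-term form in the degenerate ranges of the parameters.

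Expanding $\partial_\vp^L\partial_\t^P\partial_t^M D_\t^N$ and differentiating the integrand, I would sort the resulting terms according to how many of the $\delta^*$ factors contribute their zero-order part $A(\t):=-(\a+1/2)\cot\frac{\t}{2}+(\b+1/2)\tan\frac{\t}{2}$. Terms in which no $\delta^*$ contributes $A(\t)$ are pure $\t$-, $t$- and $\vp$-derivatives and are controlled exactly as in \cite[Corollary 3.5]{NoSjSz}: each differentiation brings down a factor ($\partial_t q$, $\partial_\t q$ or $\partial_\vp q$) which, near the diagonal where $q$ is small, carries an effective factor $(t^2+q)^{1/2}$, so that after bookkeeping through the indices $W$ and $s$ (reconciled by the shift identity $\Upsilon^{\ab}_{W,s}=\Upsilon^{\ab}_{W-2\tau,s+\tau}$) one lands on $\Upsilon^{\ab}_{2M+2N,L+P}$. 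The genuinely new terms are those with at least one factor $A(\t)$; its singularities at $\t=0,\pi$ are not dominated pointwise by any $\Upsilon^{\ab}_{W,s}$, so a crude termwise estimate is bound to fail and some cancellation must be exploited.

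The crux is to absorb these singular factors, and the mechanism I would use is integration by parts in $u$ and $v$. The decisive structural point is that in $D_\t^N$ every $\delta^*$ is applied to an expression already acted on by a $\delta$, hence the singular factor always multiplies an integrand containing $\partial_\t q$, which is odd in $u$ (resp.\ $v$) in its two constituents. Writing $u(1-u^2)^{\a-1/2}=-\frac{1}{2\a+1}\partial_u(1-u^2)^{\a+1/2}$, integrating by parts, and using $\partial_u q=-\st\svp$ extracts a compensating factor $\st$ that exactly cancels the singularity of $\cot\frac{\t}{2}$, at the price of raising the parameter $\a\mapsto\a+1$ and the exponent; the factor $\tan\frac{\t}{2}$ is treated symmetrically in $v$ via $\partial_v q=-\ct\cvp$. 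Comparison estimates between the integrals against $d\Pi_{\a+1}$, $d\Pi_{\b+1}$ and those against $d\Pi_\a$, $d\Pi_\b$ then return every term to the family $\Upsilon^{\ab}_{2M+2N,L+P}$; this is precisely the role of the shifted measures $d\Pi_{\a,K}$, $d\Pi_{\b,R}$ and of the prefactors $(\st+\svp)^{Kk}$, $(\ct+\cvp)^{Rr}$ entering the definition of $\Upsilon^{\ab}_{W,s}$.

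The hard part will be carrying out this scheme uniformly in all the ranges $\ab>-1$. For $\a,\b\ge-1/2$ the densities vanish at $u,v=\pm1$, so the boundary terms in the integrations by parts disappear and one stays within case (i); but in the degenerate ranges $-1<\a<-1/2$ (and/or $\b$) the starting representation is already the multi-term form of cases (ii)--(iv), the boundary contributions survive, and one must verify that after iterating $\delta$ and $\delta^*$, integrating by parts, and commuting the outer $\partial_\vp^L\partial_\t^P$ past the $\t$-dependent coefficients, every resulting piece --- including the boundary terms and the additive constant ``$1$'' --- is still dominated by $\Upsilon^{\ab}_{2M+2N,L+P}$. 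I expect the bulk of the technical effort to be located here, and this is exactly the point where the tools adapted from \cite{CaSz,TZS} come into play.
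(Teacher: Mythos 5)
Your treatment of the first summand agrees with the paper's (the only point you gloss over is that \cite[Corollary 3.5]{NoSjSz} is stated for $t\in(0,1]$ and one must note it persists uniformly on $(0,T_0]$, hence on $(0,\pi]$; this is minor). For the $D^N_\t$ summand, however, your scheme has a genuine gap. You propose to sort the terms of the expanded operator by how many $\delta^*$ factors contribute the zero\nobreakdash-order coefficient $A(\t)=-(\a+1/2)\cot\frac{\t}2+(\b+1/2)\tan\frac{\t}2$ and to absorb the resulting singularities termwise by integration by parts in $u,v$. This works for $N=2$, where $A(\t)\,\partial_\t q$ happens to pair each singular factor with the matching odd monomial ($\cot\frac\t2$ with $u\ct\svp$, $\tan\frac\t2$ with $v\st\cvp$). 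But already for $N=3$ the bucket ``one $\delta^*$ contributes $A$'' equals $\partial_\t\big(A\,\partial_\t F\big)=A'\,\partial_\t F+A\,\partial_\t^2 F$, and these two terms are \emph{individually} not dominated by any $\Upsilon^{\ab}_{W,s}$: near $\t=0$ one has $A'(\t)\simeq 2(\a+1/2)\t^{-2}$, and the $v$-part of $\partial_\t q$ contributes $A'(\t)\cdot\frac v2\st\cvp\simeq v\cvp/\t$, while $\partial_\t^2 q=\frac u4\st\svp+\frac v4\ct\cvp$ contributes $A(\t)\cdot\frac v4\ct\cvp\simeq -v\cvp/(2\t)$; a single integration by parts in $v$ only extracts a factor $\ct\cvp$, which does not vanish at $\t=0$, so each term retains an uncompensated $1/\t$. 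The function $D^3_\t H_t^{\ab}$ is of course fine, but only because these singular pieces cancel \emph{against each other} (and against parts of $\partial_\t^3 F$) — a cancellation between $\t$-dependent coefficients that your $u,v$-integration-by-parts mechanism, and your termwise bucketing, cannot see. The same problem worsens with $N$, since $A^{(k)}(\t)\sim\t^{-1-k}$.

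The cancellation you are missing is exactly what the eigenvalue equation encodes, and this is how the paper proceeds: from $\J^{\ab}=\delta^*\delta+\lam_0^{\ab}=D^2+\lam_0^{\ab}$ and $\partial_t^2\mathcal H_t^{\ab}=\J^{\ab}\mathcal H_t^{\ab}$ one gets $D_\t^2H_t^{\ab}(\t,\vp)=(\partial_t^2-\lam_0^{\ab})H_t^{\ab}(\t,\vp)$, whence
\begin{equation*}
\partial_\vp^L\partial_\t^P\partial_t^M D_\t^N H_t^{\ab}(\t,\vp)
=\sum_{n=0}^{\lfloor N/2\rfloor}c_n\,(\lam_0^{\ab})^{\lfloor N/2\rfloor-n}\,
\partial_\vp^L\partial_\t^{P+\overline N}\partial_t^{M+2n}H_t^{\ab}(\t,\vp),
\qquad \overline N=\chi_{\{N\ \mathrm{odd}\}}.
\end{equation*}
Every term on the right is a pure derivative already controlled by \cite[Corollary 3.5]{NoSjSz}, and since each has $W=2(M+2n)+2\overline N\le 2M+2N$, the monotonicity $\Upsilon^{\ab}_{W,s}\lesssim\Upsilon^{\ab}_{W',s}$ for $W\le W'$ finishes the proof in a few lines. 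No new analysis of the integral representation, and no integration by parts in $u,v$ beyond what is already in \cite{NoSjSz}, is needed. I recommend you replace the direct differentiation of the kernel by this reduction; as written, your argument does not close for $N\ge 3$.
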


\begin{proof}
We first note that the estimates obtained in \cite[Corollary 3.5]{NoSjSz} actually hold uniformly in $t\in (0,T_0]$, where $T_0 > 0$ is an arbitrary but fixed constant. Therefore, the estimate for the first term in question is just this strengthened version of the above mentioned result since 
$\delta^N =  \partial^N$. We now focus on the remaining term.

Notice that the Jacobi-Poisson semigroup $\mathcal{H}_t^{\ab} = \exp(-t \sqrt{\J^{\ab}})$, $t>0$, satisfies the differential equation
$\partial_t^2 \mathcal{H}_t^{\ab} f(\t) = \J^{\ab} \mathcal{H}_t^{\ab} f(\t)$. This forces that the Jacobi-Poisson kernel $H_t^{\ab}(\t,\v)$ also satisfies this equation with respect to $\t$, which, together with the identity 
$\J^{\ab} = D^2 + \lam_0^{\ab}$, leads to the equation
\[
D_{\t}^2 H_t^{\ab}(\t,\v) = (\partial_t^2 - \lam_0^{\ab}) H_t^{\ab}(\t,\v), 
\qquad t > 0, \quad \t,\vp \in (0,\pi).
\]  
Iterating this we infer that for $t > 0$ and $\t,\vp \in (0,\pi)$ we get
\begin{align} \label{iden1}
\partial_\vp^L \partial_\t^P \partial_t^M D_\t^N
H_{t}^{\ab}(\t,\vp)  
=
\sum_{n=0}^{\lfloor N/2 \rfloor} c_n (\lam_0^{\ab})^{\lfloor N/2 \rfloor - n}
\partial_\vp^L \partial_\t^{P+ \overline{N} } \partial_t^{M + 2n} 
H_{t}^{\ab}(\t,\vp), 
\end{align}
where $\overline{N} = \chi_{\{N \; \textrm{is odd}\}}$ and $c_n$ are some constants.
Now, using the already justified estimate for the first term in question and the fact that for any $-\infty < W \le W' < \infty$, $s \in \R$ fixed we have
\begin{align*}
\Upsilon^{\ab}_{W,s}(t,\t,\vp) \lesssim
\Upsilon^{\ab}_{W',s}(t,\t,\vp),
\qquad t \in (0, \pi], \quad \t,\vp \in (0,\pi)
\end{align*}
(this easily follows from the boundedness of $q(\t,\vp,u,v)$), we obtain the desired conclusion for the second term in question.
\end{proof}

We need the following generalization of the above lemma.

\begin{lem}\label{lem:Ht1eta}
Let $M,N \in \N$ and $L,P \in \{0,1\}$ be fixed. Then,
\[
\big| 
	 \partial_\vp^L \partial_\psi^P \partial_t^M \delta_\psi^N
H_{t}^{\ab}(\psi,\vp) \big|_{\psi = \t + \eta} 
\big| 
+
\big| 
	 \partial_\vp^L \partial_\psi^P \partial_t^M D_\psi^N
H_{t}^{\ab}(\psi,\vp) \big|_{\psi = \t + \eta} 
\big| 
\lesssim 
\Upsilon^{\ab}_{2M+2N,L+P}(t,\t,\vp)
\]
uniformly in $(\eta,t) \in \Gamma$, $t\in (0,\pi]$ and 
$\t,\vp \in (0,\pi)$ satisfying $\t + \eta \in (0,\pi)$.
\end{lem}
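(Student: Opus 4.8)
The plan is to reduce Lemma \ref{lem:Ht1eta} to the already-established Lemma \ref{lem:Ht1} by exploiting the fact that the quantity $\Upsilon^{\ab}_{2M+2N,L+P}(t,\t,\vp)$ on the right-hand side does not see the shift $\eta$ in a harmful way. Concretely, the left-hand side is nothing but the left-hand side of Lemma \ref{lem:Ht1} evaluated at the point $\t + \eta$ in place of $\t$. Thus Lemma \ref{lem:Ht1} immediately yields
\[
\big|
	 \partial_\vp^L \partial_\psi^P \partial_t^M \delta_\psi^N
H_{t}^{\ab}(\psi,\vp) \big|_{\psi = \t + \eta}
\big|
+
\big|
	 \partial_\vp^L \partial_\psi^P \partial_t^M D_\psi^N
H_{t}^{\ab}(\psi,\vp) \big|_{\psi = \t + \eta}
\big|
\lesssim
\Upsilon^{\ab}_{2M+2N,L+P}(t, \t + \eta,\vp),
\]
valid uniformly for $t \in (0,\pi]$ and $\t + \eta, \vp \in (0,\pi)$. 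So the whole matter reduces to comparing $\Upsilon$ evaluated at the shifted spatial point $\t + \eta$ with $\Upsilon$ evaluated at $\t$ itself, in the regime where $(\eta,t) \in \Gamma$, i.e. $|\eta| < t$.

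The key step is therefore a pointwise comparison estimate of the form $\Upsilon^{\ab}_{W,s}(t,\t+\eta,\vp) \lesssim \Upsilon^{\ab}_{W,s}(t,\t,\vp)$ for $|\eta| < t$. First I would inspect the building block $q(\t+\eta,\vp,u,v)$ appearing in the denominators. Since $q$ is smooth in its first argument with derivative bounded uniformly in $u,v \in [-1,1]$ (being built from $\sin$ and $\cos$ of half-angles), one has $|q(\t+\eta,\vp,u,v) - q(\t,\vp,u,v)| \lesssim |\eta|$, and hence, using $|\eta| < t$ together with the nonnegativity of $q$ and the presence of the $t^2$ term, one can show
\[
t^2 + q(\t+\eta,\vp,u,v) \simeq t^2 + q(\t,\vp,u,v),
\qquad (\eta,t) \in \Gamma.
\]
Indeed $t^2 + q(\t,\vp,u,v) \gtrsim t^2 \gtrsim t \cdot |\eta| \gtrsim |\eta|^2$ handles the added fluctuation, so the two denominators are comparable up to multiplicative constants. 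This controls the integral factors in every one of the four cases (i)--(iv). For cases (ii)--(iv) one additionally has the prefactors $(\sin\frac{\t+\eta}{2} + \svp)^{Kk}$ and $(\cos\frac{\t+\eta}{2} + \cvp)^{Rr}$; these too are comparable to their unshifted counterparts, either trivially when the relevant variable stays away from the endpoints, or by absorbing the difference into the already-established comparability of the denominators via the standard trick $\sin\frac{\t+\eta}{2} \lesssim \sin\frac{\t}{2} + |\eta|$ and then using that any power of $|\eta|/\sqrt{t^2 + q}$ is bounded.

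The main obstacle I anticipate is the bookkeeping near the endpoints $0$ and $\pi$ of the interval in cases (ii)--(iv), where the type parameters drop below $-1/2$ and the extra prefactors involving $\st$, $\svp$, $\ct$, $\cvp$ enter. There the comparison of the prefactors at $\t + \eta$ versus $\t$ must be carried out carefully, making sure the mismatch is genuinely absorbed by the comparability of $t^2 + q$ and does not worsen the exponent; one must verify that raising the half-angle expressions to the powers $Kk, Rr \in \{0,1,2\}$ and adjusting the exponent $W$ by the corresponding amount via the identity $\Upsilon^{\ab}_{W,s} = \Upsilon^{\ab}_{W-2\tau,s+\tau}$ is consistent. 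Once the comparability $\Upsilon^{\ab}_{W,s}(t,\t+\eta,\vp) \simeq \Upsilon^{\ab}_{W,s}(t,\t,\vp)$ is secured in all four cases, the lemma follows immediately by combining it with the shifted form of Lemma \ref{lem:Ht1} displayed above.
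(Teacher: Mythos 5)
Your overall strategy coincides with the paper's: apply Lemma \ref{lem:Ht1} at the shifted point $\t+\eta$ and reduce everything to the pointwise comparison $\Upsilon^{\ab}_{W,s}(t,\t+\eta,\vp)\lesssim\Upsilon^{\ab}_{W,s}(t,\t,\vp)$, which in turn rests on the comparability $t^2+q(\t+\eta,\vp,u,v)\simeq t^2+q(\t,\vp,u,v)$ together with bounds of the form $\big(\sin\frac{\t+\eta}{2}+\svp\big)^{\kappa}\lesssim\big(\st+\svp\big)^{\kappa}+t^{\kappa}$, the extra powers of $t$ being absorbed via $t\le(t^2+q)^{1/2}$. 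Your handling of the prefactors in cases (ii)--(iv) is exactly what the paper does.

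However, your justification of the key comparability of the denominators has a genuine gap. From the Lipschitz bound $|q(\t+\eta,\vp,u,v)-q(\t,\vp,u,v)|\lesssim|\eta|$ you would need $t^2+q(\t,\vp,u,v)\gtrsim|\eta|$ to absorb the fluctuation, whereas your chain $t^2+q\gtrsim t^2\gtrsim t|\eta|\gtrsim|\eta|^2$ only controls a fluctuation of size $|\eta|^2$, which is strictly smaller than $|\eta|$ in the relevant regime. Concretely, take $\t=\vp$ and $u=v=1$, so that $q(\t,\vp,u,v)=0$, and let $|\eta|$ be comparable to $t$ with $t$ small; then $t^2+q(\t,\vp,u,v)=t^2\ll|\eta|$, so the crude first-order estimate cannot close the argument. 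The point is that the Lipschitz bound discards an essential cancellation: the true increment is $O(\eta^2)$ plus terms each dominated by $t^2+q(\t,\vp,u,v)$. The paper establishes this as a separate Lemma \ref{lem:qeta}, by invoking the structural comparability $q(\t,\vp,u,v)\simeq(\t-\vp)^2+(1-u)\t\vp+(1-v)(\pi-\t)(\pi-\vp)$ from \cite[(23)]{NoSj} and estimating each of the three terms after the shift. (One could alternatively repair your route by observing that $\t\mapsto q(\t,\vp,u,v)$ is nonnegative with uniformly bounded second derivative, whence $|\partial_\t q|\lesssim\sqrt{q}$, and a second-order Taylor expansion gives $|q(\t+\eta,\vp,u,v)-q(\t,\vp,u,v)|\lesssim t\sqrt{q(\t,\vp,u,v)}+t^2\lesssim t^2+q(\t,\vp,u,v)$; but some such finer input beyond the plain Lipschitz estimate is indispensable.)
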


To prove Lemma \ref{lem:Ht1eta} we shall use the following result.

\begin{lem}\label{lem:qeta}
For all $(\eta,t) \in \Gamma$, and $\t,\vp \in (0,\pi)$ with $\t + \eta \in (0,\pi)$, and all $u,v \in [-1,1]$, we have
\[
t^2 + q(\t + \eta,\vp,u,v) \simeq
t^2 + q(\t,\vp,u,v).
\]
\end{lem}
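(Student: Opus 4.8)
The plan is to prove the two-sided comparison $t^2 + q(\t+\eta,\vp,u,v) \simeq t^2 + q(\t,\vp,u,v)$ by controlling the difference of the two $q$-values and showing it is dominated by either side. Recall that
\[
q(\t,\vp,u,v) = 1 - u \st \svp - v \ct \cvp,
\]
so the difference $q(\t+\eta,\vp,u,v) - q(\t,\vp,u,v)$ equals
\[
-u\,\svp\Big(\sin\tfrac{\t+\eta}{2} - \st\Big) - v\,\cvp\Big(\cos\tfrac{\t+\eta}{2} - \ct\Big).
\]
By the mean value theorem each trigonometric increment is bounded by $|\eta|/2$, and since $(\eta,t) \in \Gamma$ means $|\eta| < t$, I would estimate the whole difference by a constant times $|\eta| \le t$. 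Using $|u|,|v| \le 1$ and $|\svp|,|\cvp| \le 1$, this gives
\[
\big| q(\t+\eta,\vp,u,v) - q(\t,\vp,u,v) \big| \lesssim |\eta| \le t.
\]

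The key observation is that $t \le t^2 + C$ is the wrong direction on its own, so instead I would bound $|\eta| \le t$ against the quantity $t^2$ only when $t$ is bounded, which it is: here $t \in (0,\pi]$, hence $t \simeq t^2/t \ge t^2/\pi$, giving $|\eta| < t \lesssim$ something comparable to $t^2 + q$. More cleanly, I would argue that since $q \ge 0$ always (which follows because $q(\t,\vp,u,v) = 1 - u\st\svp - v\ct\cvp \ge 1 - |\st\svp| - |\ct\cvp|$, and one checks $\st\svp + \ct\cvp \le 1$ by the Cauchy--Schwarz / cosine-of-difference inequality), the denominator $t^2 + q$ is at least $t^2$. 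The difference is $\lesssim |\eta| < t$, and I would like to absorb $t$ into $t^2 + q$. Since $t$ ranges only over $(0,\pi]$ the linear term $t$ can be compared with $t^2 + q$ provided $t^2 + q \gtrsim t$; this is where the crucial sharper lower bound $t^2 + q(\t,\vp,u,v) \gtrsim t^2 + |\eta|$ is needed.

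I expect the main obstacle to be precisely establishing that $t^2 + q(\t,\vp,u,v) \gtrsim t$ (equivalently that the perturbation $|\eta| < t$ is genuinely dominated by the reference quantity) rather than merely by $t^2$, since for small $t$ the bound $q \ge 0$ alone only yields $t^2 + q \ge t^2 \ll t$. To resolve this I would distinguish two regimes. When $q \gtrsim 1$ (bounded below by a positive constant, which happens unless both $\st \svp$ and $\ct \cvp$ are near their extremal values), the reference quantity is $\gtrsim 1 \ge t/\pi \gtrsim |\eta|$ and the difference is absorbed trivially. In the complementary regime where $q$ is small, I would expand $q$ near its minimum: writing $q = 1 - u\st\svp - v\ct\cvp$, smallness of $q$ forces $u,v$ near $1$ and $\t$ near $\vp$, and a second-order Taylor analysis shows $q \gtrsim (\t-\vp)^2 + (1-u) + (1-v)$ (up to the usual Jacobi-type weights). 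This is the standard lower bound for $q$ from \cite{NoSjSz}, and combined with $|\eta| < t$ it yields $t^2 + q \gtrsim t^2 + |\eta| \ge t^2$, whence $|q(\t+\eta,\vp,u,v) - q(\t,\vp,u,v)| \lesssim |\eta| \lesssim t^2 + q$, completing the comparison.

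Once the difference bound
\[
\big| q(\t+\eta,\vp,u,v) - q(\t,\vp,u,v) \big| \lesssim \tfrac12\big(t^2 + q(\t,\vp,u,v)\big)
\]
is in hand, both inequalities of the equivalence follow immediately: the reverse triangle inequality gives $t^2 + q(\t+\eta,\vp,u,v) \ge \tfrac12(t^2 + q(\t,\vp,u,v))$ and symmetrically $\le \tfrac32(t^2 + q(\t,\vp,u,v))$, which is exactly $t^2 + q(\t+\eta,\vp,u,v) \simeq t^2 + q(\t,\vp,u,v)$ with constants independent of $(\eta,t) \in \Gamma$, $\t,\vp$, and $u,v \in [-1,1]$. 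The only genuine work is therefore the uniform control of the increment by $|\eta|$ and the verification that $|\eta|$ is dominated by the reference quantity, both of which reduce to the elementary estimates sketched above together with the geometric fact $|\eta| < t \le \pi$.
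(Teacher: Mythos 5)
Your reduction of the lemma to a difference bound of the form $|q(\t+\eta,\vp,u,v)-q(\t,\vp,u,v)| \lesssim t^2 + q(\t,\vp,u,v)$ has the right shape, but the way you estimate the difference destroys exactly the information needed to close the argument. The mean value bound $|q(\t+\eta,\vp,u,v)-q(\t,\vp,u,v)|\lesssim|\eta|$ is correct but too lossy, and the absorption you then need, $|\eta|\lesssim t^2+q(\t,\vp,u,v)$, is false: take $\t=\vp=\pi/2$, $u=v=1$, so that $q(\t,\vp,u,v)=1-\cos\frac{\t-\vp}{2}=0$, and let $\eta=t/2$ with $t$ small; then $t^2+q(\t,\vp,u,v)=t^2\ll|\eta|$. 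Your attempted repair in the small-$q$ regime --- ``combined with $|\eta|<t$ it yields $t^2+q\gtrsim t^2+|\eta|$'' --- asserts precisely this false inequality; the lower bound $q\gtrsim(\t-\vp)^2+(1-u)\t\vp+(1-v)(\pi-\t)(\pi-\vp)$ contains no $\eta$ and cannot produce it. Note that in this example the lemma itself still holds, because the true increment is $1-\cos\frac{\eta}{2}\simeq\eta^2\le t^2$, i.e.\ quadratic rather than linear in $\eta$: your first-order estimate discards exactly this quadratic gain, together with the factors $u\svp$ and $v\cvp$ that would otherwise be matched against the corresponding pieces of $q$.

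The paper's proof keeps this structure instead of collapsing the increment to $|\eta|$. It invokes the comparability $q(\t,\vp,u,v)\simeq(\t-\vp)^2+(1-u)\t\vp+(1-v)(\pi-\t)(\pi-\vp)$ from \cite{NoSj} and estimates the shift $\t\mapsto\t+\eta$ term by term: $(\t+\eta-\vp)^2\lesssim(\t-\vp)^2+t^2$ (the cross term $2\eta(\t-\vp)$ is absorbed quadratically via $\eta^2+(\t-\vp)^2$), $(1-u)(\t+\eta)\vp\le(1-u)\t\vp+(1-u)t\vp$, and then the elementary bounds $t\vp\lesssim t^2+(\t-\vp)^2+\t\vp$ and its reflection in $\pi/2$; the factors $(1-u)$, $(1-v)$ are carried along so that every error term is dominated by $t^2+q(\t,\vp,u,v)$. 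The reverse inequality then follows by replacing $(\t,\eta)$ with $(\t+\eta,-\eta)$, as you correctly anticipate at the end. To repair your argument you would have to bound $\big|u\svp\big(\sin\frac{\t+\eta}{2}-\st\big)+v\cvp\big(\cos\frac{\t+\eta}{2}-\ct\big)\big|$ while retaining these weights and the cancellation between the two summands, which in practice amounts to redoing the paper's term-by-term analysis.
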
 

\begin{proof}
We first show the estimate $(\lesssim)$. To prove this, it suffices to verify that 
\begin{equation*}
q(\t + \eta,\vp,u,v) \lesssim
t^2 + q(\t,\vp,u,v), 
\qquad (\eta,t) \in \Gamma, \quad \t, \vp, \t + \eta \in (0,\pi),
\quad u,v \in[-1,1]. 
\end{equation*}
Observe that the following comparability holds, see \cite[(23)]{NoSj},
\[
q(\t,\vp,u,v) \simeq
(\t-\vp)^2 + (1-u) \t \vp + (1-v) (\pi-\t) (\pi-\vp), \qquad \t, \vp \in (0,\pi), \quad u,v \in[-1,1].
\]
Further, since
\begin{align*}
(\t + \eta - \vp )^2 & \lesssim  (\t - \vp)^2 + \eta^2 \le (\t - \vp)^2 + t^2, \\
(1-u) (\t + \eta) \vp & \le  (1-u) \t \vp + (1-u) t \vp, \\
(1-v) (\pi-\t - \eta) (\pi-\vp) & \le  (1-v) (\pi-\t) (\pi-\vp) + (1-v) t (\pi-\vp),
\end{align*}
it is enough to check that
\begin{align*}
t \vp & \lesssim t^2 + (\t - \vp)^2 + \t \vp, 
\qquad t > 0, \quad \t, \vp \in (0,\pi)\\
t (\pi-\vp) & \lesssim t^2 + (\t - \vp)^2 + (\pi-\t) (\pi-\vp), 
\qquad t > 0, \quad \t, \vp \in (0,\pi).
\end{align*}
This, in turn, follows from the relation 
$(\t - \vp)^2 + \t \vp \simeq \t^2 + \vp^2$ for $\t, \vp \in (0,\pi)$, and by a reflection in $\pi/2$ reason.
Therefore we proved $(\lesssim)$. 

Now, the bound $(\gtrsim)$ is a straightforward consequence of the first one. 
Precisely,
\[
t^2 + q(\t,\vp,u,v)
= 
t^2 + q((\t + \eta) + (-\eta),\vp,u,v)
\lesssim
t^2 + q(\t + \eta,\vp,u,v)
\]
since $(-\eta,t) \in \Gamma$ and $\t, \t + \eta \in (0,\pi)$.
\end{proof}

\begin{proof}[Proof of Lemma \ref{lem:Ht1eta}.]
By Lemma \ref{lem:Ht1} it suffices to show that for each $W, s \in \R$  fixed, we have
\[
\Upsilon_{W,s}^{\ab} (t, \t + \eta, \vp)
\lesssim
\Upsilon_{W,s}^{\ab} (t, \t, \vp), \qquad (\eta,t) \in \Gamma, 
\quad \t,\vp, \t + \eta \in  (0,\pi).
\]
We prove this estimate when $-1<\ab <-1/2$; the proofs of the remaining cases are similar and even simpler, and hence are omitted. To proceed, notice that for any $\kappa \ge 0$ fixed we have
\begin{align*}
\Big( \sin \frac{\t + \eta}{2} + \sin \frac{\vp}{2} \Big)^\kappa
	& \lesssim
\Big( \sin \frac{\t}{2} + \sin \frac{\vp}{2} \Big)^\kappa
+ t^\kappa, \qquad (\eta,t) \in  \Gamma, \quad \t,\vp, \t + \eta \in  (0,\pi), \\
\Big( \cos \frac{\t + \eta}{2} + \cos \frac{\vp}{2} \Big)^\kappa
	& \lesssim
\Big( \cos \frac{\t}{2} + \cos \frac{\vp}{2} \Big)^\kappa
+ t^\kappa, \qquad (\eta,t) \in  \Gamma, \quad \t,\vp, \t + \eta \in  (0,\pi).
\end{align*}
These can easily be justified with the aid of the relations
\begin{equation}\label{compsin}
\sin \frac{x}2 \simeq x, \qquad \cos \frac{x}2 \simeq \pi - x, \quad x \in (0,\pi),
\end{equation}
and the fact that $|\eta| < t$.
Using these bounds, Lemma \ref{lem:qeta} and then the obvious inequality 
$t \le (t^2 + q(\t,\vp,u,v))^{1/2}$, we get
\begin{align*}
\Upsilon_{W,s}^{\ab} (t, \t + \eta, \vp) 
	& \lesssim
1 + \sum_{K,R=0,1} \sum_{k,r=0,1,2} \sum_{\varepsilon_1, \varepsilon_2 = 0,1}
\bigg( \st+\svp \bigg)^{Kk\varepsilon_1} 
\bigg( \ct + \cvp \bigg)^{Rr\varepsilon_2} \\
& \qquad \quad \times  t^{Kk(1-\varepsilon_1) + Rr(1-\varepsilon_2)}  
\iint
\frac{\piK \, \piR }{(t^2 + q(\t,\vp,u,v))^{ \a+\b+3\slash 2 + W/4 +Kk\slash 2 + Rr\slash 2 + s\slash 2  }} \\
	& \lesssim
1 + \sum_{K,R=0,1} \sum_{k,r=0,1,2} \sum_{\varepsilon_1, \varepsilon_2 = 0,1}
\bigg( \st+\svp \bigg)^{Kk\varepsilon_1} \bigg( \ct + \cvp \bigg)^{Rr\varepsilon_2} \\
& \qquad \quad \times \iint
\frac{\piK \, \piR }{(t^2 + q(\t,\vp,u,v))^{ \a+\b+3\slash 2 + W/4 +Kk\varepsilon_1\slash 2 + Rr\varepsilon_2\slash 2 + s\slash 2  }}.
\end{align*}
Since $k\varepsilon_1, r\varepsilon_2 \in \{ 0,1,2 \}$, the last quantity above is comparable to $\Upsilon_{W,s}^{\ab} (t, \t, \vp) $ and the conclusion follows.
\end{proof}

The next result is a slightly modified special case of \cite[Lemma 3.7]{NoSjSz} (note that the norm estimate in \cite[Lemma 3.7, p.\,201, l.\,2]{NoSjSz} is still valid if we replace $L^p((0,1),t^{W-1}dt)$ appearing there by $L^p((0,T_0),t^{W-1}dt)$ with any $T_0 > 0$ fixed), which played a crucial role in showing standard estimates for various kernels investigated in the just mentioned paper. This lemma provides an important connection between estimates emerging from Lemma \ref{lem:Ht1eta} and the standard estimates related to the space of homogeneous type $( (0,\pi), d\mu_{\ab}, |\cdot|)$.

\begin{lem}[{\cite[Lemma 3.7]{NoSjSz}}] \label{lem:finbridgep=2}
Let $W \ge 1$ and $s\ge 0$ be fixed. Then the estimate
\[
\| \Upsilon^{\ab}_{W,s}(t,\t,\vp) \|_{ L^2((0,\pi),t^{W-1}dt) } 
\lesssim
\frac{1}{|\t-\vp|^s} \; \frac{1}{\mu_{\ab}(B(\t,|\t-\vp|))}
\]
holds uniformly in $\t,\vp \in (0,\pi)$, $\t\ne\vp$.
\end{lem}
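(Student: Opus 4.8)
The plan is to reduce the estimate to a one–dimensional computation over the variable $t$, after integrating out the auxiliary variables $u,v$ hidden inside the definition of $\Upsilon^{\ab}_{W,s}$. Concretely, the left-hand side is an $L^2$ norm in $t$ over $(0,\pi)$ with weight $t^{W-1}$, so by Minkowski's integral inequality I would pull the $L^2_t$ norm inside the $\iint\,d\Pi\,d\Pi$ integrals (and inside the finite sums over $K,R,k,r$ in cases (ii)--(iv)). That leaves, as the core object, the quantity
\[
\Big\| \big(t^2 + q(\t,\vp,u,v)\big)^{-(\a+\b+3/2+W/4+\cdots)} \Big\|_{L^2((0,\pi),\,t^{W-1}dt)},
\]
with $u,v$ treated as frozen parameters and $q = q(\t,\vp,u,v) \ge 0$ playing the role of a constant shift.

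The key scalar lemma I would isolate is the elementary bound
\[
\Big\| \big(t^2 + q\big)^{-\rho} \Big\|_{L^2((0,\pi),\,t^{W-1}dt)}
\lesssim q^{-\rho + W/4},
\]
valid when the exponent $\rho$ is large enough that the integral converges at $t=\pi$ (here $\rho = \a+\b+3/2+W/4+\cdots$, which is comfortably large) and $W\ge 1$ ensures integrability at $t=0$. This follows by the substitution $t = \sqrt{q}\,r$, which turns the integral into $q^{-2\rho+W/2}$ times a convergent $r$-integral over $(0,\pi/\sqrt{q})$; the factor $q^{W/4}$ is exactly engineered so that, after putting this back, the surviving exponent on $q$ matches the power that integrates nicely against $d\Pi_\a\,d\Pi_\b$. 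The $W/4$ in the definition of $\Upsilon$ is precisely what makes this clean.

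After applying this scalar bound, the remaining task is to show that the resulting $\iint$-integral against the probability measures $d\Pi_\a(u)\,d\Pi_\b(v)$, together with the prefactors $(\st+\svp)^{Kk}$ etc.\ in the degenerate cases, reproduces the target $|\t-\vp|^{-s}\,\mu_{\ab}(B(\t,|\t-\vp|))^{-1}$. For this I would invoke the comparability $q(\t,\vp,u,v)\simeq (\t-\vp)^2 + (1-u)\t\vp + (1-v)(\pi-\t)(\pi-\vp)$ quoted from \cite{NoSj} and the standard estimate for $\mu_{\ab}(B(\t,|\t-\vp|))$ in terms of $|\t-\vp|$, $\t$, and $\pi-\t$. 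The integrals $\int (1-u)^{\a-1/2}(\cdots)^{-\rho'}\,du$ are computed via the familiar beta-type asymptotics for $d\Pi_\a$; the exponent bookkeeping (the surplus powers $Kk/2$, $Rr/2$ and the matching prefactors) is designed so that the singular contributions near $u=1$ or $v=1$ are tamed. This is genuinely the content of the original \cite[Lemma 3.7]{NoSjSz}, so I would largely cite that computation rather than redo it.

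The main obstacle, and the one point where care is needed beyond citing \cite{NoSjSz}, is the upper limit of integration: here the $t$-integral runs over $(0,\pi)$ rather than over $(0,1)$ as in the original lemma. I would address this exactly as the remark preceding the statement indicates — the scalar bound above holds with any fixed finite upper limit $T_0$ (here $T_0=\pi$), since enlarging the integration interval only affects the convergent tail where $t\gtrsim 1$, on which the integrand is bounded by a constant times $q^{-2\rho}$ uniformly; because the right-hand side of the claimed estimate is always $\gtrsim 1/\mu_{\ab}(0,\pi)$, this tail is harmlessly absorbed. Once this adjustment of the endpoint is checked, the rest of the argument is identical to \cite[Lemma 3.7]{NoSjSz}, and the conclusion follows.
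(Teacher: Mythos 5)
Your proposal agrees, at the level of what this paper actually records, with the paper's treatment: the paper gives no proof of this lemma but imports it from \cite[Lemma 3.7]{NoSjSz}, adding exactly the observation you make at the end, namely that replacing the interval $(0,1)$ of the source by $(0,T_0)$ for any fixed $T_0>0$ (here $T_0=\pi$) is harmless, because the added range $t\in[1,T_0]$ contributes a uniformly bounded quantity which is absorbed by the right-hand side, always $\gtrsim 1/\mu_{\ab}((0,\pi))$. Your reconstruction of the internal proof is a legitimate alternative route: you integrate in $t$ first (Minkowski plus the substitution $t=\sqrt{q}\,r$) and only then in $(u,v)$, whereas the argument in \cite{NoSjSz} goes the other way, first estimating the $(u,v)$-integrals pointwise in $t$ by a ``bridging'' lemma of the form $\Upsilon^{\ab}_{W,s}(t,\t,\vp)\lesssim (t+|\t-\vp|)^{-W/2-s}\,\mu_{\ab}(B(\t,t+|\t-\vp|))^{-1}$ and then performing the elementary $t$-integration. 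Both orders reach the target, since your route terminates in the $t=0$ instance of that same bridging estimate (which is a separate lemma in \cite{NoSjSz}, not Lemma 3.7 itself); note also that in the degenerate cases the cancellation of the surplus exponents $Kk/2$, $Rr/2$ comes from the prefactors $(\st+\svp)^{Kk}$, $(\ct+\cvp)^{Rr}$ and the modified measures $d\Pi_{\a,K}$, $d\Pi_{\b,R}$, not from extra powers of $|\t-\vp|$.

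One claim in your sketch needs repair. The exponent surplus $\rho-W/4=\a+\b+3/2+s/2+Kk/2+Rr/2$ is \emph{not} ``comfortably large'': for $s=0$, $Kk=Rr=0$ and $\a+\b$ close to $-2$ (case (iv)) it is negative, and then your scalar bound $\|(t^2+q)^{-\rho}\|_{L^2((0,\pi),t^{W-1}dt)}\lesssim q^{-\rho+W/4}$ is false --- the substitution yields $q^{-2\rho+W/2}\int_0^{\pi/\sqrt{q}}(1+r^2)^{-2\rho}r^{W-1}\,dr$, and when $4\rho\le W$ the $r$-integral grows like $(\pi/\sqrt{q})^{W-4\rho}$ (or logarithmically), so the left-hand side is $\simeq 1$ rather than $\simeq q^{W/4-\rho}\to 0$. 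The conclusion still holds in that regime, for the trivial reason that the corresponding terms of $\Upsilon^{\ab}_{W,s}$ are then bounded by an absolute constant while the right-hand side of the lemma is bounded below by a positive constant; but this case split must be made explicit. The same caveat applies to your tail bound ``$\lesssim q^{-2\rho}$'' on $t\in[1,\pi]$: the bound you actually need there is the $q$-independent one $(t^2+q)^{-2\rho}\le C$, since $q^{-2\rho}$ is not $\lesssim q^{-2\rho+W/2}$ and would not integrate against $d\Pi_{\a}\,d\Pi_{\b}$ to something controlled by the right-hand side.
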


Note that for any $\ab > -1$ fixed, the $\mu_{\ab}$ measure of the ball (interval) $B(\t,r)$ can be described as follows, cf. \cite[Lemma 4.2]{NoSj},
\begin{equation}\label{ball}
\mu_{\ab}(B(\t,r)) \simeq 
\begin{cases}
        r(\t+r)^{2\a + 1} (\pi-\t+r)^{2\b + 1}, & r \in (0,\pi) \\
        1, & r \ge \pi
    \end{cases}, \qquad \t \in (0,\pi).
\end{equation}

The next lemma is a long-time counterpart of Lemma \ref{lem:finbridgep=2} and was partially proved in \cite[Corollary 3.9]{NoSjSz}.

\begin{lem} \label{cor:tLp=2}
Let $\ab > -1$, $M,N \in \N$ be such that $M+N>0$, and let $L,P \in \{0,1\}$ and $W \ge 1$ be fixed.
Then
$$
\Big\|\sup_{\t,\vp \in (0,\pi) } 
\Big(
\big| 
	 \partial_\vp^L \partial_\t^P \partial_t^M \delta_\t^N
H_{t}^{\ab}(\t,\vp)  
\big| 
+
\big| 
	 \partial_\vp^L \partial_\t^P \partial_t^M D_\t^N
H_{t}^{\ab}(\t,\vp)  
\big| 
\Big)
	\Big\|_{L^2((\pi,\infty),t^{W-1}dt)} < \infty.
$$
\end{lem}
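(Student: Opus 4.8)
The plan is to abandon the integral representation used for small $t$ and instead exploit the very rapid convergence of the defining spectral series in the regime $t\ge\pi$. The first step is to reduce both families of derivatives to \emph{ordinary} derivatives of $H_t^{\ab}$. For the $\delta^N$ part this is immediate: since $\delta=\partial_\t$, one has $\partial_\vp^L\partial_\t^P\partial_t^M\delta_\t^N H_t^{\ab}=\partial_\vp^L\partial_\t^{P+N}\partial_t^M H_t^{\ab}$. For the $D^N$ part I would invoke the reduction identity \eqref{iden1} established in the proof of Lemma \ref{lem:Ht1}, which rewrites $\partial_\vp^L\partial_\t^P\partial_t^M D_\t^N H_t^{\ab}$ as a finite linear combination of terms $\partial_\vp^L\partial_\t^{P+\overline N}\partial_t^{M+2n}H_t^{\ab}$. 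The virtue of this step is that it removes $\delta^*$, whose coefficients $\cot\frac{\t}{2}$ and $\tan\frac{\t}{2}$ are singular at the endpoints; after the reduction only ordinary derivatives of order at most $P+1$ in $\t$ and $L$ in $\vp$ survive. It therefore suffices to estimate $\sup_{\t,\vp\in(0,\pi)}\big|\partial_\vp^L\partial_\t^{P'}\partial_t^{M'}H_t^{\ab}(\t,\vp)\big|$ in $L^2((\pi,\infty),t^{W-1}dt)$ for the finitely many multi-indices $(L,P',M')$ so produced.

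Next I would differentiate the series $H_t^{\ab}(\t,\vp)=\sum_{n\ge0}e^{-t\sqrt{\lam_n^{\ab}}}\P_n^{\ab}(\t)\P_n^{\ab}(\vp)$ term by term; this is legitimate because for $t$ bounded away from $0$ all the formally differentiated series converge absolutely and uniformly in $(\t,\vp)$, as the bounds below show. This gives
\[
\partial_\vp^L\partial_\t^{P'}\partial_t^{M'}H_t^{\ab}(\t,\vp)
=\sum_{n\ge0}\big(-\sqrt{\lam_n^{\ab}}\,\big)^{M'}e^{-t\sqrt{\lam_n^{\ab}}}
\big(\partial_\t^{P'}\P_n^{\ab}\big)(\t)\,\big(\partial_\vp^{L}\P_n^{\ab}\big)(\vp).
\]
I would then use the standard uniform polynomial bounds $\sup_{\t\in(0,\pi)}\big|\partial_\t^{k}\P_n^{\ab}(\t)\big|\lesssim(n+1)^{\kappa(k,\ab)}$, which follow from the chain rule, the classical differentiation formula for Jacobi polynomials (each $\t$-derivative lowers the degree and shifts the parameters, producing a factor polynomial in $n$), Szeg\H{o}'s uniform sup-bound, and the growth $c_n^{\ab}\simeq n^{1/2}$ of the normalizing constants. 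Together with $\sqrt{\lam_n^{\ab}}=\big|n+\tfrac{\a+\b+1}{2}\big|$, which grows linearly in $n$, these yield $\sup_{\t,\vp}\big|\partial_\vp^L\partial_\t^{P'}\partial_t^{M'}H_t^{\ab}\big|\le\sum_{n\ge0}(n+1)^{A}e^{-t\sqrt{\lam_n^{\ab}}}$ for a suitable exponent $A=A(M,N,P,L,\ab)$.

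Finally I would extract the decay, and this is where the hypothesis $M+N>0$ enters decisively. The $n=0$ term is the only one that can fail to decay, and it is automatically absent in exactly that case: when $\sqrt{\lam_0^{\ab}}=0$ (that is $\a+\b=-1$), every reduced term has $P'\ge1$, so $\partial_\t^{P'}\P_0^{\ab}=0$, or else $M'\ge1$, so $\big(\sqrt{\lam_0^{\ab}}\big)^{M'}=0$; in both cases its $n=0$ contribution vanishes, while for $n\ge1$ one has $\sqrt{\lam_n^{\ab}}=n+\tfrac{\a+\b+1}{2}>\tfrac12$. When $\sqrt{\lam_0^{\ab}}>0$ the $n=0$ term, if present, already decays at the positive rate $\sqrt{\lam_0^{\ab}}$. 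Hence every non-vanishing term satisfies $\sqrt{\lam_n^{\ab}}\ge c_0$ for a constant $c_0>0$ depending only on $\ab$. For $t\ge\pi>1$ I would write $e^{-t\sqrt{\lam_n^{\ab}}}=e^{-(t-1)\sqrt{\lam_n^{\ab}}}e^{-\sqrt{\lam_n^{\ab}}}\le e^{-(t-1)c_0}e^{-\sqrt{\lam_n^{\ab}}}$ and sum the convergent series $\sum_{n}(n+1)^Ae^{-\sqrt{\lam_n^{\ab}}}$, obtaining $\sup_{\t,\vp}|\cdots|\lesssim e^{-c_0 t}$. Since $c_0>0$, this bound lies in $L^2((\pi,\infty),t^{W-1}dt)$, and summing the finitely many reduced contributions completes the proof.

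I would expect the main obstacle to be verifying that the decay rate $c_0$ is strictly positive across the entire admissible range $\ab>-1$: when $\a+\b+1$ vanishes or is negative the bottom of the spectrum of $\sqrt{\J^{\ab}}$ is $0$ or small, and one must combine the assumption $M+N>0$ with the reduction identity \eqref{iden1} to see that the non-decaying constant mode never actually contributes. The remaining ingredients—term-by-term differentiation and the uniform polynomial bounds on Jacobi polynomials and their derivatives—are standard.
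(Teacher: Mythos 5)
Your proposal is correct, and its skeleton coincides with the paper's: both reduce $\delta^N$ to $\partial_\t^N$ trivially and reduce $D^N$ to pure derivatives via the identity \eqref{iden1} from the proof of Lemma \ref{lem:Ht1}. The difference is what happens afterwards: the paper simply invokes \cite[Corollary 3.9]{NoSjSz} for the long-time exponential decay of $\sup_{\t,\vp}|\partial_\vp^L\partial_\t^{P'}\partial_t^{M'}H_t^{\ab}(\t,\vp)|$, whereas you reprove that ingredient from scratch by term-by-term differentiation of the spectral series, Szeg\H{o}-type polynomial bounds on $\P_n^{\ab}$ and its derivatives, and the spectral gap $\sqrt{\lam_n^{\ab}}\ge c_0>0$ for the surviving modes. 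This makes your argument self-contained at the cost of redoing work already available in the literature; your isolation of where $M+N>0$ enters (killing the possibly non-decaying $n=0$ mode when $\a+\b=-1$) is exactly the right point and is implicit in the cited corollary. One small imprecision: in the $D^N$ reduction with $N$ even and $M=0$, the identity \eqref{iden1} does produce terms with $P'=M'=0$, so it is not literally true that ``every reduced term has $P'\ge1$ or $M'\ge1$''; however, those terms carry the coefficient $(\lam_0^{\ab})^{\lfloor N/2\rfloor-n}$ with a strictly positive exponent, which vanishes precisely in the problematic case $\lam_0^{\ab}=0$, so your conclusion stands. The argument is sound.
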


\begin{proof}
Since $\delta^N = \partial^N$, the norm finiteness of the first expression in question is a direct consequence of \cite[Corollary 3.9]{NoSjSz}. The conclusion for the second term follows by combining the identity \eqref{iden1} with \cite[Corollary 3.9]{NoSjSz}.
\end{proof}

The next three lemmas (Lemmas \ref{lem:Omega}--\ref{lem:Omegaprime}) allow us to control certain expressions involving the function $\ottt$ appearing in the definitions of the kernels 
$S^{\ab}_{M,N}(\t,\vp)$ and $\mathcal{S}^{\ab}_{M,N}(\t,\vp)$.

\begin{lem}\label{lem:Omega}
Assume that $\ab > -1$. Then
\[
\int_{|\eta|<t} \chi_{ \{ \t + \eta \in (0,\pi) \} } \ot \, d\eta =1,
\qquad t>0, \quad \t \in (0,\pi).
\]
\end{lem}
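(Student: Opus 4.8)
The plan is to reduce the identity to the very definition of $V_{t}^{\ab}(\t)$ by a change of variables, since the integrand is nothing but the normalized Jacobi weight. First I would observe that the factor $V_{t}^{\ab}(\t) = \mu_{\ab}(B(\t,t))$ hidden in $\ot$ does not depend on the integration variable $\eta$, so it can be pulled outside the integral. What remains is
\[
\frac{1}{V_{t}^{\ab}(\t)} \int_{|\eta|<t} \chi_{\{\t+\eta\in(0,\pi)\}} \Big(\sin\tfrac{\t+\eta}{2}\Big)^{2\a+1}\Big(\cos\tfrac{\t+\eta}{2}\Big)^{2\b+1}\,d\eta .
\]

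Next I would perform the substitution $\psi = \t+\eta$, under which $d\eta = d\psi$ and the integrand becomes precisely the density of $\mu_{\ab}$ evaluated at $\psi$. The constraints $|\eta|<t$ and $\t+\eta\in(0,\pi)$ translate into $\psi\in(\t-t,\t+t)$ and $\psi\in(0,\pi)$, whose intersection is exactly the ball $B(\t,t)=(\t-t,\t+t)\cap(0,\pi)$ of the underlying space of homogeneous type. Hence the integral equals $\int_{B(\t,t)} d\mu_{\ab}(\psi) = \mu_{\ab}(B(\t,t)) = V_{t}^{\ab}(\t)$, and after dividing by $V_{t}^{\ab}(\t)$ the claimed value $1$ follows.

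There is essentially no genuine obstacle here: the statement is a direct consequence of the fact that $\ottt$ was defined exactly so as to be the density, with respect to $d\eta$, of the $\mu_{\ab}$-uniform probability measure on $B(\t,t)$. The only point that requires a moment's care is matching the region of integration with the restricted ball $B(\t,t)$, i.e.\ remembering that in this setting balls are intervals intersected with $(0,\pi)$, so that the indicator $\chi_{\{\t+\eta\in(0,\pi)\}}$ together with $|\eta|<t$ produces $B(\t,t)$ rather than the full interval $(\t-t,\t+t)$.
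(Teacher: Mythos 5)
Your proof is correct and is exactly the intended argument: the paper dismisses this as a ``simple exercise,'' and the substitution $\psi=\t+\eta$ turning the integral into $\mu_{\ab}(B(\t,t))/V_t^{\ab}(\t)=1$ is the only natural way to do it. No issues.
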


\begin{proof}
Simple exercise.
\end{proof}

\begin{lem}\label{lem:Omegadiff}
Assume that $\ab > -1$. 
Then there exists $\gamma=\gamma(\a,\b) \in (0,1/2]$ such that
\begin{align*}
\int_{|\eta|<t} \chi_{ \{ \t + \eta, \t' + \eta  \in (0,\pi) \} } 
\Big| \sqrt{\ot} - \sqrt{\ott} \Big|^2 \, d\eta 
\lesssim
\begin{cases}
        \Big( \frac{|\t -\t'|}{t} \Big)^{2\gamma}, & t \in (0,\pi] \\
        |\t -\t'|^{2\gamma}, &  t \ge \pi
    \end{cases}, 
\end{align*}
uniformly in $t > 0$ and $\t, \t' \in (0,\pi)$.
Moreover, the estimate holds with any $\gamma \in (0,1/2]$ satisfying 
$\gamma < \a \wedge \b + 1$.
\end{lem}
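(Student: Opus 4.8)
The plan is to reduce the integral estimate to a pointwise bound on the integrand $|\sqrt{\ot} - \sqrt{\ott}|$ and then integrate in $\eta$. Recall that
$$
\ot = \frac{ \big( \st[\t+\eta] \big)^{2\a+1} \big( \ct[\t+\eta] \big)^{2\b+1} }{V_t^{\ab}(\t)},
$$
so after taking square roots the two terms differ only through the numerator, which depends on $\t+\eta$ versus $\t'+\eta$, and through the denominators $V_t^{\ab}(\t)$ versus $V_t^{\ab}(\t')$. By symmetry we may assume $\t \le \t'$, and we may restrict attention to $\eta$ with both $\t+\eta, \t'+\eta \in (0,\pi)$. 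The first step is to control the ratio $V_t^{\ab}(\t)/V_t^{\ab}(\t')$: using the explicit description \eqref{ball} of the ball measure, together with $|\eta|<t$, one shows this ratio is bounded above and below by constants, so the two normalizing factors are comparable and can be absorbed. This reduces matters to estimating the difference of the numerators $g(\t+\eta) - g(\t'+\eta)$, where $g(x) = (\sin\frac{x}2)^{\a+1/2}(\cos\frac{x}2)^{\b+1/2}$, divided by $\sqrt{V_t^{\ab}(\t)}$.

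The key step is a pointwise Hölder-type bound on $g$. I would write $g(\t'+\eta)-g(\t+\eta) = \int_{\t+\eta}^{\t'+\eta} g'(x)\,dx$ and estimate $g'$, which behaves like $(\sin\frac{x}2)^{\a-1/2}(\cos\frac{x}2)^{\b+1/2}$ plus a symmetric term. The exponents $\a-1/2$ and $\b-1/2$ may be negative (this is precisely where the restriction $\gamma < \a\wedge\b+1$ enters), so $g'$ can be non-integrable or singular near the endpoints $0,\pi$. To handle this I would split into the regime where $\t+\eta$ (equivalently, by comparability, the relevant variable) is away from the endpoints, where $g$ is Lipschitz and the mean-value estimate gives a factor $|\t-\t'|$, and the regime near an endpoint, where instead of differentiating one uses the Hölder continuity of the power function $x \mapsto x^{\a+1/2}$ (resp.\ $x\mapsto x^{\b+1/2}$) with exponent $\gamma \le \a\wedge\b+1$, via $|x^{\kappa}-y^{\kappa}| \lesssim |x-y|^{\gamma} (x\vee y)^{\kappa-\gamma}$ when $0<\gamma\le\kappa$. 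Combining these yields, after dividing by $\sqrt{V_t^{\ab}(\t)}$ and invoking \eqref{compsin} and \eqref{ball}, a pointwise bound of the form
$$
\Big| \sqrt{\ot} - \sqrt{\ott} \Big| \lesssim \Big(\frac{|\t-\t'|}{t}\Big)^{\gamma} \frac{1}{\sqrt{V_t^{\ab}(\t)}}\cdot \sqrt{\ot}^{\,1/2}\text{-type factor},
$$
more precisely a bound whose square, integrated against $d\eta$ over $|\eta|<t$, telescopes against the normalization of Lemma \ref{lem:Omega}.

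The final step is to integrate in $\eta$. Since $\int_{|\eta|<t}\chi_{\{\t+\eta\in(0,\pi)\}}\ot\,d\eta = 1$ by Lemma \ref{lem:Omega}, the leftover factor of $\sqrt{\ot}$ integrates to a constant, and the remaining $(|\t-\t'|/t)^{2\gamma}$ (for $t\le\pi$) comes out of the integral, giving exactly the claimed right-hand side. For the long-time regime $t\ge\pi$ one uses that $V_t^{\ab}(\t)\simeq 1$ and the domain of integration has length $\lesssim\min(t,\pi)$, which converts the $t^{-2\gamma}$ into the constant appearing in the $t\ge\pi$ case and leaves $|\t-\t'|^{2\gamma}$. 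The main obstacle I anticipate is the endpoint analysis: getting the Hölder exponent $\gamma$ to be exactly $\a\wedge\b+1$ (capped at $1/2$) requires care in matching the singular exponents of $g'$ against the behavior of $V_t^{\ab}(\t)^{-1/2}$ near $0$ and $\pi$, and one must check both endpoints (which are interchanged by the reflection $\t\mapsto\pi-\t$, $\a\leftrightarrow\b$) as well as the transitional region where $|\t-\t'|$ is comparable to the distance to an endpoint.
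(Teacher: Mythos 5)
Your reduction to ``the difference of the numerators'' is where the argument breaks down. Comparability of $V_t^{\ab}(\t)$ and $V_t^{\ab}(\t')$ (which does hold when $|\t-\t'|\le t$, by \eqref{ball} and \eqref{comp1}) is not enough to absorb the normalizing factors: writing $\sqrt{\ot}-\sqrt{\ott}$ as a numerator difference over $\sqrt{V_t^{\ab}(\t)}$ plus a cross term involving $V_t^{\ab}(\t)^{-1/2}-V_t^{\ab}(\t')^{-1/2}$, mere comparability only bounds the cross term by $O\big(V_t^{\ab}(\t)^{-1/2}\big)$, which after squaring and integrating in $\eta$ gives an $O(1)$ bound instead of the required gain $(|\t-\t'|/t)^{2\gamma}$. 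What is actually needed is the quantitative estimate $\big|V_t^{\ab}(\t)-V_t^{\ab}(\t')\big|\lesssim (|\t-\t'|/t)^{2\gamma}\,V_t^{\ab}(\t')$, and this is a genuinely nontrivial half of the proof: one must distinguish the regimes $\t\wedge\t'\ge 2t$, $t\le\t\wedge\t'<2t$ and $\t\wedge\t'<t\le\t\vee\t'$, and in the last regime the ball crosses an endpoint of $(0,\pi)$ and the measure is only H\"older of order $2\a+2$ (resp.\ $2\b+2$) in the center, which is precisely where the constraint $\gamma<\a\wedge\b+1$ reappears. Your plan omits this component entirely.

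A secondary gap is the endpoint H\"older bound for the numerator. You invoke $|x^{\kappa}-y^{\kappa}|\lesssim|x-y|^{\gamma}(x\vee y)^{\kappa-\gamma}$ for $\kappa=\a+1/2$, but for $-1<\a<-1/2$ one has $\kappa<0$ and this inequality is false (let $x\to 0$ with $y$ fixed: the left-hand side blows up while the right-hand side stays bounded), so the admissible range of parameters is not covered as written. A workable substitute is to square first, bounding $|\sqrt{\ot}-\sqrt{\ott}|^2$ by $|\ot-\ott|$ via \eqref{estxyxi} with $\xi=2$, then apply \eqref{estxyxi} with $\xi=1/(2\gamma)$ to $f=\big(\ste\big)^{2\a+1}\big(\cte\big)^{2\b+1}$ and the Mean Value Theorem to $f^{1/(2\gamma)}$, which yields the factor $f(\widetilde{\t})^{1-2\gamma}|f'(\widetilde{\t})|^{2\gamma}$ with all exponents nonnegative; the integration in $\eta$ is then done against \eqref{ball}, with the intermediate point replaced by $\t\wedge\t'$ or $\t\vee\t'$ according to the sign of $2\b+1-2\gamma$. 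Finally, the preliminary reduction to $|\t-\t'|\le t$ (the opposite case being trivial by Lemma \ref{lem:Omega}) should be stated explicitly, since \eqref{comp1} and your comparability claim rely on it.
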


In the proof of this lemma we will use the following estimate. For a fixed $\xi \ge 1$ we have
\begin{equation}\label{estxyxi}
|x-y|^{\xi} \lesssim |x^{\xi}-y^{\xi}|, \qquad x,y \ge 0.
\end{equation}

\begin{proof}[Proof of Lemma \ref{lem:Omegadiff}]
Since the function $\ot$ stabilizes for $t \ge \pi$ and the constraint $\t, \t + \eta \in (0,\pi)$ forces $|\eta| < \pi$,
we may assume that $0 < t \le \pi$.
Further, from now on we restrict our attention to $|\t - \t'| \le t$. Otherwise, an application of Lemma \ref{lem:Omega} shows that the left-hand side in question is controlled by a constant and the conclusion trivially follows.

Using the estimate \eqref{estxyxi} with $\xi=2$ we obtain
\begin{align*}
& \chi_{ \{ \t + \eta, \t' + \eta  \in (0,\pi) \} } 
\Big| \sqrt{\ot} - \sqrt{\ott} \Big|^2 \\
	& \quad \lesssim
\chi_{ \{ - \t \wedge \t' < \eta < \pi - \t \vee \t' \} } 
\big| \ot - \ott \big| \\
	& \quad \le 
\chi_{ \{ - \t \wedge \t' < \eta < \pi - \t \vee \t' \} } 
\frac{\Big| \big( \ste \big)^{2\a +1} \big( \cte \big)^{2\b +1}  
- \big( \stpe \big)^{2\a +1} \big( \ctpe \big)^{2\b +1} \Big|}{ V_t^{\ab}(\t') }  \\
	& \quad \qquad +
\chi_{ \{ \t + \eta  \in (0,\pi) \} }
\ot
\frac{
\big| V_t^{\ab}(\t) - V_t^{\ab}(\t') \big|}{ V_t^{\ab}(\t') }  \\
		& \quad \equiv
I_1(\t,\t',\eta,t)  + I_2(\t,\t',\eta,t). 
\end{align*}
Therefore we have reduced the proof of the lemma to showing that for $j=1,2$ we have
\begin{equation}\label{Iest}
\int_{|\eta| < t} I_j(\t,\t',\eta,t) \, d\eta 
\lesssim
\bigg( \frac{|\t -\t'|}{t} \bigg)^{2\gamma}, \qquad t\in (0,\pi], 
\quad \t,\t' \in (0,\pi), \quad |\t- \t'| \le t,
\end{equation} 
for each fixed $\gamma \in (0,1/2]$ satisfying $\gamma < \a \wedge \b + 1$.

We will treat $I_1$ and $I_2$ separately.
Let 
$f(\t)=\big( \ste \big)^{2\a +1} \big( \cte \big)^{2\b +1}$. Applying the estimate \eqref{estxyxi} specified to $\xi=1/(2\gamma)$ and then the Mean Value Theorem we obtain
\begin{align*}
|f(\t) - f(\t')|
\lesssim
\big| f(\t)^{1/(2\gamma)} - f(\t')^{1/(2\gamma)} \big|^{2\gamma}
\lesssim
|\t - \t'|^{2\gamma} f(\widetilde{\t})^{ 1 - 2\gamma } |f'(\widetilde{\t})|^{2\gamma},
\end{align*}
where $\widetilde{\t}$ is a convex combination of $\t$ and $\t'$, which depends also on $\eta$. An elementary computation gives
\begin{align*}
|f'(\t)| & \lesssim
\Big( \ste \Big)^{2\a} \Big( \cte \Big)^{2\b} \\
	& \simeq
\Big( \ste \Big)^{2\a + 1/(2\gamma)} \Big( \cte \Big)^{2\b}
+ 
\Big( \ste \Big)^{2\a} \Big( \cte \Big)^{2\b + 1/(2\gamma)}.
\end{align*}
Using this 
we get
\begin{align*}
I_1(\t,\t',\eta,t)
	 & \lesssim
|\t - \t'|^{2\gamma} 
\chi_{ \{ - \t \wedge \t' < \eta < \pi - \t \vee \t' \} } 
\frac{\big(\sin \frac{\widetilde{\t} + \eta}{2}\big)^{2\a + 2 - 2\gamma} 
\big(\cos \frac{\widetilde{\t} + \eta}{2}\big)^{2\b + 1 - 2\gamma} }{V_t^{\ab}(\t')} \\ 
	 & \qquad +
|\t - \t'|^{2\gamma} 
\chi_{ \{ - \t \wedge \t' < \eta < \pi - \t \vee \t' \} } 
\frac{\big(\sin \frac{\widetilde{\t} + \eta}{2}\big)^{2\a + 1 - 2\gamma} 
\big(\cos \frac{\widetilde{\t} + \eta}{2}\big)^{2\b + 2 - 2\gamma} }{V_t^{\ab}(\t')} \\ 
	 & \equiv
J_1(\t,\t',\eta,t) + J_2(\t,\t',\eta,t).
\end{align*}
We first deal with $J_1$. Using the 
fact that $2\a + 2 - 2\gamma > 0$, $|\eta|<t$ and
the estimates \eqref{compsin}, \eqref{ball}, we obtain
\[
J_1(\t,\t',\eta,t)
\lesssim
|\t - \t'|^{2\gamma} 
\chi_{ \{ - \t \wedge \t' < \eta < \pi - \t \vee \t' \} }
\frac{(\t \vee \t' + t)^{2\a + 2 - 2\gamma} 
\big( \cos \frac{\t_* + \eta}{2}\big)^{2\b + 1 - 2\gamma} }
{t (\t' +t)^{2\a + 1} (\pi - \t' +t)^{2\b + 1}},
\]
where
\[
\t_* = 
\left\{ \begin{array}{ll}
\t \wedge \t', & 2\b + 1 - 2\gamma \ge 0 \\
\t \vee \t', & 2\b + 1 - 2\gamma < 0
\end{array} \right..
\]
Using the relations 
\begin{equation}\label{comp1}
\t' + t \simeq \t + t \lesssim 1, \qquad \pi - \t' + t \simeq \pi - \t + t \lesssim 1, 
\end{equation} 
(which are valid because $|\t - \t'| \le t$ and $t \in (0,\pi]$), the fact that $1-2\gamma \ge 0$ and the above bound for $J_1$ we arrive at 
\[
J_1(\t,\t',\eta,t)
\lesssim
|\t - \t'|^{2\gamma} 
\chi_{ \{ - \t_* < \eta < \pi - \t_* \} }
\frac{
\big( \cos \frac{\t_* + \eta}{2}\big)^{2\b + 1 - 2\gamma} }
{t  (\pi - \t' +t)^{2\b + 1}},
\] 
and consequently
\begin{align*}
\int_{|\eta|<t} J_1(\t,\t',\eta,t) \, d\eta
\lesssim
\frac{ |\t - \t'|^{2\gamma} }
{t  (\pi - \t' +t)^{2\b + 1}} 
\mu_{-1/2,\b - \gamma} (B(\t_*,t)).
\end{align*}
Now an application of \eqref{ball} and once again \eqref{comp1} leads to the desired bound 
for the integral related to $J_1$. Proceeding in a similar way we may easily get the same estimate for the integral connected with $J_2$.
Hence to finish the proof, it is enough to verify \eqref{Iest} for $j=2$.

An application of Lemma \ref{lem:Omega} gives
\begin{align*}
\int_{|\eta|<t} I_2(\t,\t',\eta,t) \, d\eta
	& =
\frac{\big| V_t^{\ab}(\t) - V_t^{\ab}(\t') \big|}{V_t^{\ab}(\t')}.
\end{align*}
By the definition of $V_t^{\ab}(\t)$ and the fact that $|\t - \t'| \le t$, we get
\[
\big| V_t^{\ab}(\t) - V_t^{\ab}(\t') \big|
= 
\big| \mu_{\ab} (\t \wedge \t' - t , \t \vee \t' - t) 
- \mu_{\ab} ( \t \wedge \t' + t , \t \vee \t' + t) 
\big|,
\]
and further
\begin{align*}
\frac{\big| V_t^{\ab}(\t) - V_t^{\ab}(\t') \big|}{V_t^{\ab}(\t')}
	& \le 
\frac{\mu_{\ab}  (\t \wedge \t' - t , \t \vee \t' - t) }
{V_t^{\ab}(\t')}
+
\frac{\mu_{\ab} (\t \wedge \t' + t , \t \vee \t' + t) }
{V_t^{\ab}(\t')} \\
	& \equiv
K^{\ab}_1(\t,\t',t) + K^{\ab}_2(\t,\t',t).
\end{align*}
Since $\mu_{\ab}(B(x,r)) = \mu_{\ba}(B(\pi - x,r))$ and consequently
$K^{\ab}_2(\t,\t',t) = K^{\ba}_1(\pi - \t,\pi - \t',t)$, 
in order to finish the proof,
it suffices to verify that
\begin{equation*}
K^{\ab}_1(\t,\t',t)
\lesssim
\bigg( \frac{|\t -\t'|}{t} \bigg)^{2\gamma},
\qquad t \in (0,\pi], \quad \t,\t' \in (0,\pi), \quad |\t - \t'| \le t.
\end{equation*}
Taking into account \eqref{ball} we obtain
\begin{align*}
& \mu_{\ab}  (\t \wedge \t' - t , \t \vee \t' - t)  \\
	& \quad =
\chi_{ \{ \t \wedge \t' \ge t  \}}
\mu_{\ab} \bigg( B \bigg(\frac{\t + \t' - 2t}{2} , \frac{|\t -\t'|}{2} \bigg) \bigg) 
+
\chi_{ \{ \t \wedge \t' < t \le  \t \vee \t' \}}
\mu_{\ab} \bigg( B \bigg(\frac{\t \vee \t' - t}{2} , 
\frac{\t \vee \t' - t}{2} \bigg) \bigg)  \\
	& \quad \simeq
\chi_{ \{ \t \wedge \t' \ge t  \}}
|\t -\t'| (\t \vee \t' - t)^{2\a + 1} (\pi - \t \wedge \t' + t)^{2\b + 1} 
+
\chi_{ \{ \t \wedge \t' < t \le  \t \vee \t' \}}
(\t \vee \t' - t)^{2\a + 2}.
\end{align*}
Using once again \eqref{ball} and \eqref{comp1} we get
\begin{align*}
K^{\ab}_1(\t,\t',t)
& \simeq
\chi_{ \{ \t \wedge \t' \ge 2t  \}}
\frac{|\t -\t'|}{t}
+
\chi_{ \{ 2t > \t \wedge \t' \ge t  \}}
|\t -\t'| \frac{(\t \vee \t' - t)^{2\a + 1}}{t^{2\a + 2}} \\
	 & \qquad +
\chi_{ \{ \t \wedge \t' < t \le  \t \vee \t' \}}
\frac{ (\t \vee \t' - t)^{2\a + 2} }{t^{2\a + 2}}.
\end{align*}
The relevant bound for the first term is straightforward. To analyze the second one it is enough to use the estimates
\begin{align*}
\chi_{ \{ 2t > \t \wedge \t' \ge t  \}}
|\t - \t'|^{2\gamma + (1-2\gamma)}
(\t \vee \t' - t)^{2\a + 1}
	& \le
\chi_{ \{ 2t > \t \wedge \t'  \}}
|\t - \t' |^{2\gamma} 
(\t \vee \t' - t)^{2\a + 2 - 2\gamma} \\
	 & \lesssim
|\t - \t' |^{2\gamma} 
t^{2\a + 2 - 2\gamma}.
\end{align*}
The third term can be bounded by means of the inequality
\begin{align*}
\chi_{ \{ \t \wedge \t' < t \le  \t \vee \t' \}}
(\t \vee \t' - t)^{2\a + 2}
\le (\t \vee \t' - \t \wedge \t')^{2\a + 2}
= |\t - \t'|^{2\a + 2},
\end{align*}
and the fact that $2\a + 2 > 2\gamma$.
This finishes proving Lemma \ref{lem:Omegadiff}.

\end{proof}

\begin{lem}\label{lem:Omegaprime}
Assume that $\ab > -1$. 
Then there exists $\gamma=\gamma(\a,\b) \in (0,1/2]$ such that
\begin{align*}
\int_{|\eta|<t} \chi_{ \{ \t + \eta \in (0,\pi), \t' + \eta  \notin (0,\pi) \} } 
\ot  \, d\eta
\lesssim
\begin{cases}
        \Big( \frac{|\t -\t'|}{t} \Big)^{2\gamma}, & t \in (0,\pi] \\
        |\t -\t'|^{2\gamma}, &  t \ge \pi
    \end{cases}, 
		\qquad \t,\t' \in (0,\pi).
\end{align*}
Moreover, the estimate holds with any $\gamma \in (0,1/2]$ satisfying 
$\gamma \le \a \wedge \b + 1$.
\end{lem}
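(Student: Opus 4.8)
The plan is to localise the $\eta$-integration near the endpoints of $(0,\pi)$, remove the contribution near $\pi$ by a reflection, and then estimate the surviving piece directly by comparing the numerator of $\ot$ with $V_t^{\ab}(\t)$ via \eqref{ball}.

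First I would reduce the problem. As in the proof of Lemma \ref{lem:Omegadiff}, the case $t\ge\pi$ follows from the case $t=\pi$: there the constraint $\t+\eta\in(0,\pi)$ forces $|\eta|<\pi\le t$, and $V_t^{\ab}(\t)=\mu_{\ab}((0,\pi))$, so $\ot$ and the whole integral are independent of $t$ and equal their value at $t=\pi$, which the $t\le\pi$ analysis will bound by $(|\t-\t'|/\pi)^{2\gamma}\simeq|\t-\t'|^{2\gamma}$. For $t\in(0,\pi]$ I may assume $|\t-\t'|\le t$, since otherwise $(|\t-\t'|/t)^{2\gamma}\gtrsim1$ and Lemma \ref{lem:Omega} already yields the estimate; I may also assume $\t\neq\t'$. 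Inspecting the indicator, the integration in $\eta$ then runs over $\big((-\t,-\t']\cup[\pi-\t',\pi-\t)\big)\cap(-t,t)$, where the first interval is nonempty only when $\t'<\t$ (Case A, with $\t+\eta$ near $0$) and the second only when $\t<\t'$ (Case B, with $\t+\eta$ near $\pi$).

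Next I would dispose of Case B by symmetry. From $\mu_{\ab}(B(x,r))=\mu_{\ba}(B(\pi-x,r))$ together with $\sin\frac{\t+\eta}{2}=\cos\frac{(\pi-\t)+(-\eta)}{2}$ and $\cos\frac{\t+\eta}{2}=\sin\frac{(\pi-\t)+(-\eta)}{2}$ one obtains $\ot=\Omega_{\b,\a}(\pi-\t,-\eta,t)$. Since the three constraints and the quantity $|\t-\t'|$ are all invariant under $(\t,\t',\eta)\mapsto(\pi-\t,\pi-\t',-\eta)$, the change of variable $\eta\mapsto-\eta$ turns the Case B integral for parameters $(\ab)$ into the Case A integral for parameters $(\ba)$. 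Hence it suffices to treat Case A; carrying this out for general parameters will produce the restriction $\gamma\le\a+1$, and the reflected Case B will analogously give $\gamma\le\b+1$, i.e.\ $\gamma\le\a\wedge\b+1$ in total.

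Finally, for Case A I would substitute $x=\t+\eta$, so that the integral equals $\mu_{\ab}(I)/V_t^{\ab}(\t)$ with $I=(\max(0,\t-t),\t-\t')\subseteq(0,|\t-\t'|)$; thus \eqref{ball} (using $B(r/2,r/2)=(0,r)$) bounds the numerator by $\mu_{\ab}((0,|\t-\t'|))\simeq|\t-\t'|^{2\a+2}$. The crux is to show that on the support the denominator is genuinely comparable to $t^{2\a+2}$: nonemptiness of the first interval forces $\t'<t$, whence $\t=\t'+|\t-\t'|<2t$ and $\t+t\simeq t$, while $\pi-\t+t\simeq1$ (if $\t\le\pi/2$ then $\pi-\t+t\ge\pi/2$, and if $\t>\pi/2$ then $t>\t/2>\pi/4$), so \eqref{ball} yields $V_t^{\ab}(\t)\simeq t\,(\t+t)^{2\a+1}(\pi-\t+t)^{2\b+1}\simeq t^{2\a+2}$. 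The Case A integral is then $\lesssim(|\t-\t'|/t)^{2\a+2}\le(|\t-\t'|/t)^{2\gamma}$ whenever $\gamma\le\a+1$, by $|\t-\t'|\le t$. I expect this comparison $V_t^{\ab}(\t)\simeq t^{2\a+2}$ on the support — the fact that the stray factor $(\pi-\t+t)^{2\b+1}$ is harmless precisely where the integrand lives — to be the main obstacle, since it is what makes the ratio collapse to a pure power of $|\t-\t'|/t$ and fixes the admissible range of $\gamma$.
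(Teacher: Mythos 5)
Your proposal is correct and follows essentially the same route as the paper: reduce to $t\in(0,\pi]$ and $|\t-\t'|\le t$, split the $\eta$-region according to which endpoint of $(0,\pi)$ is violated, dispose of the endpoint $\pi$ by the reflection $(\t,\t',\eta,\a,\b)\mapsto(\pi-\t,\pi-\t',-\eta,\b,\a)$, and estimate the remaining piece as a ratio of $\mu_{\ab}$-measures via \eqref{ball}. The only (harmless) difference is that you bound the numerator uniformly by $\mu_{\ab}((0,|\t-\t'|))\simeq|\t-\t'|^{2\a+2}$ and show $V_t^{\ab}(\t)\simeq t^{2\a+2}$ on the support, whereas the paper computes $\mu_{\ab}(\t-t,\t-\t')$ in two sub-cases; both yield the same bound $(|\t-\t'|/t)^{2\a+2}$.
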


\begin{proof}
Let $\gamma \in (0,1/2]$ satisfying $\gamma \le \a \wedge \b + 1$ be fixed.
By the same arguments as those given in the beginning of the proof of Lemma \ref{lem:Omegadiff} we may assume that $0 < t \le \pi$ and $|\t - \t'| \le t$.
To proceed, by the definition of $\ot$ we get
\begin{align*}
& \int_{|\eta|<t} \chi_{ \{ \t + \eta \in (0,\pi), \t' + \eta  \notin (0,\pi) \} } 
\ot  \, d\eta \\
	& \quad =
\frac{1}{V_t^{\ab}(\t)} 
\int_{|\eta|<t} 
(\chi_{ \{ \t' + \eta \le 0 \} } + \chi_{ \{ \t' + \eta \ge \pi \}} )
\chi_{ \{ \t + \eta \in (0,\pi) \} } 
\Big( \ste \Big)^{2\a +1} \Big( \cte \Big)^{2\b +1}  \, d\eta \\
	& \quad \equiv
I^{\ab}_1(\t,\t',t) + I^{\ab}_2(\t,\t',t).
\end{align*}
Since $I^{\ab}_2(\t,\t',t) = I^{\ba}_1(\pi - \t,\pi - \t',t)$, in order to finish the proof, it is enough to check that 
\[
I^{\ab}_1 (\t,\t',t) \lesssim
 \bigg( \frac{|\t -\t'|}{t} \bigg)^{2\gamma}, \qquad t \in (0,\pi], 
\quad \t,\t' \in (0,\pi), \quad |\t - \t'| \le t. 
\]
Observe that
\begin{align*}
I^{\ab}_1(\t,\t',t) &= 
\chi_{ \{  \t > \t', \, t > \t'  \} }
\frac{\mu_{\ab}  (\t -t, \t -\t') }{V_t^{\ab}(\t)} \\
&=
\chi_{ \{  \t > t > \t'  \} }
\frac{\mu_{\ab} \big( B \big(\t -\frac{t+\t'}2, \frac{t -\t'}2 \big) \big) }{V_t^{\ab}(\t)} 
+
\chi_{ \{  t \ge \t > \t'  \} }
\frac{\mu_{\ab} \big( B \big( \frac{\t-\t'}2, \frac{\t-\t'}2 \big) \big) }{V_t^{\ab}(\t)}.
\end{align*}
Taking into account \eqref{ball} and then using \eqref{comp1} we obtain
\begin{align*}
I^{\ab}_1(\t,\t',t) & \simeq
\chi_{ \{ \t > t > \t'  \} }
\frac{ (t-\t') (\t-\t')^{2\a+1} }{t (\t+t)^{2\a+1}}
	+ 
\chi_{ \{ t \ge \t > \t'  \} }
\frac{  (\t-\t')^{2\a+2} }{t^{2\a+2}}
\lesssim
\bigg( \frac{|\t -\t'|}{t} \bigg)^{2\a +2},
\end{align*}
which finishes the proof because $2\a+2 \ge 2\gamma$.
\end{proof}

The following lemma comes into play when proving the smoothness conditions \eqref{sm1}, \eqref{sm2} for the kernels in question.

\begin{lem}\label{lem:Upstilde}
Assume that $\ab > -1$ and $W,s \in \R$ are fixed. Then for all $t \in (0,\pi]$ and $\t, \widetilde{\t},\vp \in (0,\pi)$ with $|\t - \vp| > 2|\t - \widetilde{\t}|$,
we have
\[
\Upsilon_{W,s}^{\ab}(t,\widetilde{\t},\vp) \simeq 
\Upsilon_{W,s}^{\ab}(t,\t,\vp).
\]
Analogously, for all $t \in (0,\pi]$ and $\t,\vp, \widetilde{\vp} \in (0,\pi)$ with $|\t - \vp| > 2|\vp - \widetilde{\vp}|$,
\[
\Upsilon_{W,s}^{\ab}(t,\t,\widetilde{\vp}) \simeq 
\Upsilon_{W,s}^{\ab}(t,\t,\vp).
\]
\end{lem}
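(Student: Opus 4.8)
The plan is to reduce the statement to an elementary comparability of the integrands appearing in the definition of $\Upsilon_{W,s}^{\ab}$. Since $\Upsilon_{W,s}^{\ab}(t,\cdot,\vp)$ is (in each of the four parameter regimes) a finite sum of terms of the form
\[
\Big( \st + \svp \Big)^{Kk} \Big( \ct + \cvp \Big)^{Rr}
\iint \frac{\piK \, \piR}{(t^2 + q(\t,\vp,u,v))^{\a + \b + 3/2 + W/4 + Kk/2 + Rr/2 + s/2}},
\]
with nonnegative (possibly zero) exponents $Kk, Rr \in \{0,1,2\}$, it suffices to prove the comparability factor-by-factor: that under the hypothesis $|\t - \vp| > 2|\t - \widetilde{\t}|$ we have, uniformly in $u,v \in [-1,1]$ and $t \in (0,\pi]$,
\begin{equation*}
t^2 + q(\widetilde{\t},\vp,u,v) \simeq t^2 + q(\t,\vp,u,v), \qquad
\sin \frac{\widetilde{\t}}2 + \svp \simeq \st + \svp, \qquad
\cos \frac{\widetilde{\t}}2 + \cvp \simeq \ct + \cvp.
\end{equation*}
Plugging these three comparabilities into every summand of $\Upsilon_{W,s}^{\ab}(t,\widetilde{\t},\vp)$ yields termwise comparability with the corresponding summand of $\Upsilon_{W,s}^{\ab}(t,\t,\vp)$, and summing over the finitely many indices $K,R,k,r$ gives the claim. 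The second assertion, with the roles of $\t$ and $\vp$ interchanged, follows by the symmetry of $q$ in $(\t,\vp)$ and the same three comparabilities read in the variable $\vp$ (now using $|\t - \vp| > 2|\vp - \widetilde{\vp}|$).

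The heart of the matter is thus the comparability of the $q$-expressions, and I would establish it via the representation already recalled in the proof of Lemma \ref{lem:qeta}, namely
\[
q(\t,\vp,u,v) \simeq (\t - \vp)^2 + (1-u)\t\vp + (1-v)(\pi-\t)(\pi-\vp).
\]
Writing $\t = \widetilde{\t} + \sigma$ with $|\sigma| = |\t - \widetilde{\t}| < |\t-\vp|/2$, the hypothesis forces $|\widetilde{\t} - \vp| \simeq |\t - \vp|$ (indeed $|\widetilde{\t}-\vp| \ge |\t - \vp| - |\sigma| > |\t-\vp|/2$ and $|\widetilde{\t}-\vp| \le |\t-\vp| + |\sigma| < \tfrac32 |\t-\vp|$), so the leading term $(\t-\vp)^2$ is already comparable. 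For the remaining two terms I would use that $|\sigma|$ is small relative to $|\t-\vp|$ together with $\t,\widetilde{\t} \in (0,\pi)$: since $(1-u) \ge 0$ and $(1-v) \ge 0$, it is enough to see that $\widetilde{\t}\vp \lesssim \t\vp + (\t-\vp)^2$ and the reflected statement $(\pi-\widetilde{\t})(\pi-\vp) \lesssim (\pi-\t)(\pi-\vp) + (\t-\vp)^2$, each of which follows from the comparabilities $\widetilde{\t} \lesssim \t + |\sigma|$ and $(\pi-\widetilde{\t}) \lesssim (\pi-\t) + |\sigma|$ and the elementary bound $|\sigma|\vp \lesssim |\t-\vp|^2 + \t\vp$ (the analogue of the two auxiliary inequalities proved inside Lemma \ref{lem:qeta}). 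The reverse inequality is obtained by interchanging the roles of $\t$ and $\widetilde{\t}$, which is legitimate because the hypothesis is symmetric in these two points up to the harmless constant $2$.

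For the sine and cosine factors I would simply invoke \eqref{compsin}, which gives $\st \simeq \t$ and $\ct \simeq \pi - \t$ on $(0,\pi)$, reducing the claim to $\widetilde{\t} + \vp \simeq \t + \vp$ and $(\pi - \widetilde{\t}) + (\pi-\vp) \simeq (\pi-\t)+(\pi-\vp)$; both are immediate from $|\widetilde{\t}-\t| < |\t-\vp|/2$ together with $\t + \vp \gtrsim |\t-\vp|$ and $(\pi-\t)+(\pi-\vp) \gtrsim |\t-\vp|$. I expect the main obstacle to be bookkeeping rather than conceptual: one must keep track of the fact that the exponents in the denominator depend on $Kk$ and $Rr$, so the comparability of $t^2 + q$ must be raised to the relevant power, and one should note that the constants are uniform in $u,v$ precisely because the representation of $q$ isolates the $u,v$ dependence into the manifestly nonnegative coefficients $(1-u)\t\vp$ and $(1-v)(\pi-\t)(\pi-\vp)$. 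Since there are only finitely many summands and the comparability constants do not depend on $t,u,v$, the termwise estimates assemble into the asserted two-sided bound.
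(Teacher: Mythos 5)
Your proposal is correct and follows essentially the same route as the paper: reduce the claim to the comparabilities $q(\widetilde{\t},\vp,u,v)\simeq q(\t,\vp,u,v)$ and $\sin\frac{\widetilde{\t}}2+\svp\simeq\st+\svp$ (plus its cosine analogue), the latter handled via \eqref{compsin} and the triangle inequality, and dispatch the second assertion by symmetry. The only difference is that the paper simply cites \cite[Lemma 4.6]{NoSj} for the $q$-comparability, whereas you re-derive it from the representation $q(\t,\vp,u,v)\simeq(\t-\vp)^2+(1-u)\t\vp+(1-v)(\pi-\t)(\pi-\vp)$; your derivation is sound (the factors $(1-u),(1-v)\in[0,2]$ carry through the scalar inequalities harmlessly).
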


\begin{proof}
For symmetry reasons it is enough to prove only the first part of this lemma. 
Further, taking into account the definition of $\Upsilon_{W,s}^{\ab}(t,\t,\vp)$ and 
the relation, see \cite[Lemma 4.6]{NoSj},
\[
q(\widetilde{\t},\vp,u,v) \simeq q(\t,\vp,u,v),
\qquad |\t-\vp|>2|\t-\widetilde{\t}|, \quad u,v \in [-1,1],
\]
we see that our task is reduced to showing that
\[
\stt + \svp \simeq \st + \svp, \qquad 
\ctt + \cvp \simeq \ct + \cvp, \qquad |\t-\vp|>2|\t-\widetilde{\t}|.
\]
Reflecting in $\pi/2$, it suffices to verify only the first relation. Moreover, with the aid of \eqref{compsin}, this is equivalent to checking that
\[
\widetilde{\t} + \vp \simeq \t +\vp, \qquad |\t-\vp|>2|\t-\widetilde{\t}|.
\]
This, however, is routine and follows by a simple application of the triangle inequality. 
\end{proof}

Finally, we are ready to prove Theorem \ref{thm:kerest}.

\begin{proof}[Proof of Theorem \ref{thm:kerest}.]
We first show the standard estimates for the kernel $S^{\ab}_{M,N}(\t,\vp)$. To begin with we prove the growth condition \eqref{gr}.
By Lemma \ref{lem:Ht1eta} (specified to $L = P = 0$) we have
\begin{align} \label{Sest}
|S^{\ab}_{M,N,\eta,t}(\t,\vp)|
	& \lesssim
\Big(
\chi_{ \{ t \in (0,\pi) \} } \Upsilon^{\ab}_{2M + 2N, 0}(t,\t,\vp) \\
	& \qquad + \nonumber
\chi_{ \{ t \ge \pi \} } 
\sup_{ \xi,\zeta \in (0,\pi) } \big| \partial_{t}^M \delta_{\xi}^N H_t^{\ab}(\xi,\zeta) \big|
\Big)
\chi_{ \{ \t + \eta  \in (0,\pi) \} }
\sqrt{\ot},
\end{align}
for $\t,\vp \in (0,\pi)$ and $(\eta,t) \in \Gamma$.
Consequently, using Lemma \ref{lem:Omega}, we infer that
\begin{align*}
\big\| S^{\ab}_{M,N,\eta,t}(\t,\vp) 
\big\|_{ L^2(\Gamma, t^{2M+2N-1}d\eta dt) }
	 & \lesssim
\big\| \Upsilon^{\ab}_{2M + 2N, 0}(t,\t,\vp) 
\big\|_{ L^2((0,\pi), t^{2M+2N-1} dt) } \\
	 & \qquad +
\big\| \sup_{ \xi,\zeta \in (0,\pi) } \big| \partial_{t}^M \delta_{\xi}^N H_t^{\ab}(\xi,\zeta) \big|
\big\|_{ L^2((\pi,\infty), t^{2M+2N-1} dt) }.
\end{align*}
Now an application of Lemma \ref{lem:finbridgep=2} (specified to $W=2M + 2N$, $s=0$)
and Lemma \ref{cor:tLp=2} (taken with $L = P = 0$ and $W=2M + 2N$) leads to the desired estimate.

We pass to proving the first smoothness condition. 
More precisely, we will show \eqref{sm1} with any fixed $\gamma \in (0,1/2]$ satisfying $\gamma < \a \wedge \b + 1$.
It is convenient to split the region of integration into four subsets, depending on whether 
$\t + \eta, \t' + \eta$ belong to $(0,\pi)$ or not. Let
\begin{align*}
\Gamma_1 & = \Gamma \cap \{ (\eta,t) : \t + \eta \in (0,\pi), 
\t' + \eta \in (0,\pi) \}, \\
\Gamma_2 & = \Gamma \cap \{ (\eta,t) : \t + \eta \in (0,\pi), 
\t' + \eta \notin (0,\pi) \}, \\
\Gamma_3 & = \Gamma \cap \{ (\eta,t) : \t + \eta \notin (0,\pi), 
\t' + \eta \in (0,\pi) \}, \\
\Gamma_4 & = \Gamma \cap \{ (\eta,t) : \t + \eta \notin (0,\pi), 
\t' + \eta \notin (0,\pi) \}. 
\end{align*}
Since the treatment of the integral over $\Gamma_4$ is trivial and the case of $\Gamma_3$ is analogous to $\Gamma_2$ 
(precisely, one should  use in addition Lemma \ref{lem:Upstilde} with $\widetilde{\t} = \t'$ in the parallel reasoning related to $\Gamma_3$), it suffices to focus only on the two essential cases.

\noindent
{\bf Case 1:} \textbf{The analysis related to} $\mathbf{L^2(\Gamma_1, t^{2M+2N-1}d\eta dt)}.$
By the triangle inequality we get
\begin{align*}
& \big| S^{\ab}_{M,N,\eta,t}(\t,\vp) - S^{\ab}_{M,N,\eta,t}(\t',\vp) \big| \\
	 & \quad \le
\Big| \partial_{t}^M \delta_{\psi}^N H_{t}^{\ab}(\psi,\vp) \big|_{\psi = \t + \eta}  -
\partial_{t}^M \delta_{\psi}^N H_{t}^{\ab}(\psi,\vp) \big|_{\psi = \t' + \eta} \Big|
\sqrt{\ott} \\
	 & \quad \qquad + 
\Big| \partial_{t}^M \delta_{\psi}^N H_{t}^{\ab}(\psi,\vp) \big|_{\psi = \t + \eta} \Big|
\Big| \sqrt{\ot} - \sqrt{\ott} \Big| \\
	 & \quad \equiv
I_1(\t,\t',\vp,\eta,t) + I_2(\t,\t',\vp,\eta,t).
\end{align*}
We will treat $I_1$ and $I_2$ separately. We begin with showing that
\begin{align*}
\| I_1(\t,\t',\vp,\eta,t) \|_{ L^2(\Gamma_1, t^{2M+2N-1}d\eta dt) }
\lesssim
\frac{|\t - \t'|}{|\t-\vp|} 
\; \frac{1}{\mu_{\ab}(B(\t,|\t-\vp|))}, \qquad |\t-\vp| > 2|\t - \t'|.
\end{align*}
By the Mean Value Theorem we get
\[
| I_1(\t,\t',\vp,\eta,t) |
\le
|\t - \t'| 
\Big| 
\partial_{\psi} \partial_{t}^M \delta_{\psi}^N H_{t}^{\ab}(\psi,\vp) \big|_{\psi = \widetilde{\t} + \eta}\Big|
\sqrt{\ott},
\]
where $\widetilde{\t}$ is a convex combination of $\t$ and $\t'$ that depends also on $\eta$ and $t$.
Now an application of Lemma \ref{lem:Ht1eta} (with $L = 0$, $P = 1$)
and then Lemma \ref{lem:Upstilde} gives
\begin{align*}
| I_1(\t,\t',\vp,\eta,t) |
& \lesssim
|\t - \t'|
\Big(
\chi_{ \{ t \in (0,\pi) \} } \Upsilon^{\ab}_{2M + 2N, 1}(t,\t,\vp)  \\
	 & \qquad + 
\chi_{ \{ t \ge \pi \} } 
\sup_{ \xi,\zeta \in (0,\pi) } \big| \partial_{\xi} \partial_{t}^M \delta_{\xi}^N H_t^{\ab}(\xi,\zeta) \big|
\Big)
\sqrt{\ott},
\end{align*}
provided that $|\t-\vp| > 2|\t - \t'|$ and $(\eta,t) \in \Gamma_1$.
Hence, with the aid of Lemma \ref{lem:Omega}, 
Lemma \ref{lem:finbridgep=2} (with $W = 2M + 2N$, $s=1$) and 
Lemma \ref{cor:tLp=2} (taken with $L = 0$, $P = 1$, $W = 2M + 2N$), the asserted norm estimate for $I_1$ follows.

Next we check that 
\[
\| I_2(\t,\t',\vp,\eta,t) \|_{ L^2(\Gamma_1, t^{2M+2N-1}d\eta dt) }
\lesssim
\bigg( \frac{|\t - \t'|}{|\t-\vp|} \bigg)^{\gamma}  
\; \frac{1}{\mu_{\ab}(B(\t,|\t-\vp|))}, \qquad |\t-\vp| > 2|\t - \t'|.
\]
Using Lemma \ref{lem:Ht1eta} (with $L = P = 0$) we arrive at the bound 
\begin{align*}
| I_2(\t,\t',\vp,\eta,t) |
& \lesssim
\Big(
\chi_{ \{ t \in (0,\pi) \} } \Upsilon^{\ab}_{2M + 2N, 0}(t,\t,\vp)  
	+ 
\chi_{ \{ t \ge \pi \} } 
\sup_{ \xi,\zeta \in (0,\pi) } \big| \partial_{t}^M \delta_{\xi}^N H_t^{\ab}(\xi,\zeta) \big|
\Big) \\
& \qquad \qquad \qquad \times
\Big| \sqrt{\ot} - \sqrt{\ott} \Big|,
\end{align*}
for $\t, \t', \vp \in (0,\pi)$ and $(\eta,t) \in \Gamma_1$.
Further, an application of Lemma \ref{lem:Omegadiff} leads to
\begin{align*}
\| I_2(\t,\t',\vp,\eta,t) \|_{ L^2(\Gamma_1, t^{2M+2N-1}d\eta dt) }
	 & \lesssim
|\t - \t'|^{\gamma}
\bigg(
\big\| \Upsilon^{\ab}_{2M + 2N, 0}(t,\t,\vp) 
\big\|_{ L^2((0,\pi), t^{2M+2N-2\gamma-1} dt) } \\
	 & \qquad +
\big\| \sup_{ \xi,\zeta \in (0,\pi) } \big| \partial_{t}^M \delta_{\xi}^N H_t^{\ab}(\xi,\zeta) \big|
\big\|_{ L^2((\pi,\infty), t^{2M+2N-1} dt) }
\bigg).
\end{align*}
Now,  
Lemma \ref{lem:finbridgep=2} (taken with $W = 2M + 2N - 2\gamma \ge 1$, $s=\gamma$; 
observe that $\Upsilon^{\ab}_{2M + 2N,0}(t,\t,\vp) = \Upsilon^{\ab}_{2M + 2N - 2\gamma, \gamma}(t,\t,\vp)$)
together with Lemma \ref{cor:tLp=2} (specified to $L = P = 0$, $W = 2M + 2N$) produces the required bound for $I_2$. The analysis associated with $\Gamma_1$ is finished.

\noindent
{\bf Case 2:} \textbf{The analysis related to} $\mathbf{L^2(\Gamma_2, t^{2M+2N-1}d\eta dt)}.$
Since $S^{\ab}_{M,N,\eta,t}(\t',\vp) = 0$ for $(\eta,t) \in \Gamma_2$, it suffices to verify that
\begin{equation}\label{Gamma2}
\big\| S^{\ab}_{M,N,\eta,t}(\t,\vp) 
\big\|_{ L^2(\Gamma_2, t^{2M+2N-1}d\eta dt) }
\lesssim
\bigg( \frac{|\t - \t'|}{|\t-\vp|} \bigg)^{\gamma} 
\; \frac{1}{\mu_{\ab}(B(\t,|\t-\vp|))},
\end{equation}
for $|\t-\vp| > 2|\t - \t'|$. 
Combining \eqref{Sest} with Lemma \ref{lem:Omegaprime} gives
\begin{align*}
\big\| S^{\ab}_{M,N,\eta,t}(\t,\vp) 
\big\|_{ L^2(\Gamma_2, t^{2M+2N-1}d\eta dt) }
	& \lesssim
|\t - \t'|^{\gamma}
\bigg(
\big\| \Upsilon^{\ab}_{2M + 2N, 0}(t,\t,\vp) 
\big\|_{ L^2((0,\pi), t^{2M+2N-2\gamma-1} dt) } \\
	 & \qquad +
\big\| \sup_{ \xi,\zeta \in (0,\pi) } \big| \partial_{t}^M \delta_{\xi}^N H_t^{\ab}(\xi,\zeta) \big|
\big\|_{ L^2((\pi,\infty), t^{2M+2N-1} dt) }
\bigg),
\end{align*}
which leads to \eqref{Gamma2}, see the norm estimate of $I_2$ above.

Next we verify the second smoothness condition \eqref{sm2} with $\gamma = 1$ and $S^{\ab}_{M,N}(\t,\vp)$. 
Using the Mean Value Theorem, Lemma \ref{lem:Ht1eta} (with $L = 1$, $P = 0$) and then Lemma \ref{lem:Upstilde}, we obtain
\begin{align*}
\big| S^{\ab}_{M,N,\eta,t}(\t,\vp) - S^{\ab}_{M,N,\eta,t}(\t,\vp') 
\big|
	 & \lesssim
|\vp - \vp'|
\Big(
\chi_{ \{ t \in (0,\pi) \} } \Upsilon^{\ab}_{2M + 2N, 1}(t,\t,\vp)  \\
	 & \qquad + 
\chi_{ \{ t \ge \pi \} } 
\sup_{ \xi,\zeta \in (0,\pi) } \big| \partial_{\zeta} \partial_{t}^M \delta_{\xi}^{N} H_t^{\ab}(\xi,\zeta) \big|
\Big)
\sqrt{\ot},
\end{align*}
provided that $|\t-\vp| > 2|\vp - \vp'|$, $\t + \eta \in (0,\pi)$ and $(\eta,t) \in \Gamma$. Now, using sequently Lemma \ref{lem:Omega}, Lemma \ref{lem:finbridgep=2} (applied with $W = 2M + 2N$, $s=1$)
and Lemma \ref{cor:tLp=2} (specified to $L = 1$, $P = 0$, $W = 2M + 2N$) we get the desired conclusion.
This finishes proving Theorem \ref{thm:kerest} for the kernel $S^{\ab}_{M,N}(\t,\vp)$.

The proof of the standard estimates for the kernel $\mathcal{S}^{\ab}_{M,N}(\t,\vp)$ is just a repetition of the reasoning given above. This is indeed the case because the estimates of certain derivatives of the Jacobi-Poisson kernel established in Lemmas \ref{lem:Ht1}, \ref{lem:Ht1eta} and \ref{cor:tLp=2} are the same for both kinds of higher order derivatives $\delta^N$ and $D^N$.
We leave the details to the reader.

\end{proof}

\end{document}